\documentclass[11pt]{article}
\usepackage[utf8]{inputenc}
\usepackage{amsthm}
\usepackage{amsmath}
\usepackage{amssymb}
\usepackage{tikz}
\usepackage{geometry}
\usepackage{authblk}
\usepackage{hyperref}

\newtheorem{thm}{Theorem}[section]
\newtheorem{dfn}{Definition}[section]
\newtheorem{prop}{Proposition}[section]
\newtheorem{cor}{Corollary}[section]
\newtheorem{lem}{Lemma}[section]

\newtheorem{theorem}{Theorem}

\newcommand\V{V_{\mathcal{M}}}
\newcommand\M{\mathcal{M}}
\newcommand\N{\mathcal{N}}
\newcommand\K{K_{\mathcal{M}}}
\newcommand\teich{\tilde{\mathcal{H}}(\kappa)}
\newcommand\FM{\mathcal{F}^{\mathcal{M}}}
\newcommand\moduli{\mathcal{H}(\kappa)}
\newcommand\GL{GL_2^{+}(\mathbb{R})}
\newcommand\SL{SL_2(\mathbb{R})}

\newcommand{\R}{\mathbb{R}}
\newcommand{\Q}{\mathbb{Q}}
\newcommand{\Z}{\mathbb{Z}}
\newcommand{\frakK}{\mathfrak{K}_{\M}}

\title{A criterion for density of the isoperiodic leaves in rank one affine invariant orbifolds}
\author{Florent Ygouf \\ \texttt{florentygouf@mail.tau.ac.il}}
\affil{School of Mathematical Sciences, Tel Aviv University}

\begin{document}

\parindent=0em

\maketitle

\begin{abstract}
We define on any affine invariant orbifold $\M$ a foliation $\FM$ that generalises the isoperiodic foliation on strata of the moduli space of translation surfaces and study the dynamics of its leaves in the rank 1 case. We establish a criterion that ensures the density of the leaves and  provide two applications of this criterion. The first one is a classification of the dynamical behavior of the leaves of $\FM$ when $\M$ is a connected component of a Prym eigenform locus in genus 2 or 3 and the second provides the first examples of dense isoperiodic leaves in the stratum $\mathcal{H}(2,1,1)$. 
\end{abstract}

\section{Introduction}\label{introduction}

\subsection{Context}

Let $g \geq 1$, $n \geq 1$ and $\kappa = (k_1,\cdots,k_n)$ be a integer partition of $2g-2$. We denote by $\mathcal{H}(\kappa)$ the space of isomorphism classes of triples $q = (X_q,\mathfrak{z}_q,\omega_q)$ where $X_q$ is a genus $g$ Riemann surface, $\omega_q$ is a non-vanishing holomorphic $1$-form on $X_q$ and $\mathfrak{z}_q$ is a bijection from $\{1,\cdots,n\}$ to $\Sigma_q$ where $\Sigma_q$ is the set of zeroes of  $\omega_q$ and such that $\mathfrak{z}_q(i)$ has multiplicity $\kappa_i$. Such a $q$ is usually referred to as a pointed translation surface or a translation surface with zeroes labeled. Occasionally, we might also say surface for the sake of simplicity. The space $\moduli$ admits a natural action of the group $\GL$, which is a generalization of the action by left multiplication of $\GL$ on the space of flat tori $\GL / SL_2(\mathbb{Z})$. The classification of the closed invariant sets is a central problem in Teichmüller dynamics. Recently, the work of Eskin, Mirzakhani and Mohammadi has shed light on the structure of such sets: they are immersed orbifolds cut out by linear equations with real coefficients in period coordinates. These objects are referred to as affine invariant orbifolds. See \cite{eskin2015isolation}. 

\medskip

Transverse to the $\GL$-action, there is a local action by $\mathbb{C}^{n-1}$ that fits into a holomorphic foliation of $\moduli$. It is usually referred to as the isoperiodic foliation, the absolute period foliation, the kernel foliation or the Rel foliation. The leaf $\mathcal{F}_q$ of a translation surface $q$ is locally obtained as a level set of the absolute periods map. See \cite{hooper2015rel} for a careful definition of this foliation. This foliation has been introduced about 25 years ago by Kontsevich and Eskin, and later by McMullen and Calta before it became a central object in Teichmüller dynamics. See for example how it is involved in the classification of horocycle orbits of Prym eigenforms in $\mathcal{H}(1,1)$ obtained by Bainbridge, Smillie and Weiss in \cite{bainbridge2016horocycle}. 

\medskip 

Several papers have been devoted to understanding the dynamics of its leaves. McMullen showed that the foliation is ergodic with respect to the Masur-Veech measure in the principal stratum $\mathcal{H}(1,\cdots,1)$ in genus 2 and 3 using Ratner's theory in \cite{mcmullen2014moduli}. Hooper and Weiss gave the first examples of dense leaves outside the principal stratum in \cite{hooper2015rel}. Shortly after, Calsamiglia, Deroin and Francaviglia have obtained a Ratner-like classification of the minimal sets in the principal stratum and obtained the ergodicity with respect to the Masur-Veech measure as a consequence of this classification. See \cite{calsamiglia2015transfer}. Simultaneously, Hamenstädt gave another proof of the ergodicity in the principal strata in \cite{hamenstadt2018ergodicity}. Both these last two results used McMullen's result as a base case for an induction. Surprisingly, apart from the examples of Hooper and Weiss, nothing is known for the dynamics of the isoperiodic foliation in strata where at least one zero is not simple. 

\medskip

\subsection{Statement of the results}

We define on any affine invariant orbifold $\M$ a foliation $\FM$ that generalizes the isoperiodic foliation on the strata of the moduli space. These foliations are particularly interesting in the light of Eskin, Mirzakhani and Mohammadi's result as any information on the geometry of affine invariant orbifolds directly translates into information on the dynamics of the $\GL$-action. The dynamics of the foliation $\FM$ also contains information on the dynamics of the isoperiodic foliation itself as any leaf  $\FM_q$ is a connected component of $\mathcal{F}_q \cap \M$. 

\medskip 

The foliation $\FM$ will be referred to as the $\M$-isoperiodic foliation. For some affine invariant orbifolds, the leaves of $\FM$ have dimension $0$. It is for instance the case for Teichmüller discs or for hyperelliptic loci of non-hyperelliptic strata. More sophisticated examples are given in \cite{eskin2018billiards}. The other affine invariant orbifolds are called nonabsolute. In this text we study the dynamics of the foliation $\FM$ in the case where $\M$ is a nonabsolute rank 1 affine invariant orbifold. We consider a property $\mathcal{P}$ that describes cylinder decompositions of a certain type in $\M$ and we establish the following criterion: 

\begin{theorem}\label{A}
Let $\M$ be a nonabsolute rank 1 affine invariant orbifold. If $\M$ satisfies property $\mathcal{P}$, then all the leaves of $\FM$ are projectively dense. 
\end{theorem}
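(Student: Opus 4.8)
The plan is to study the closure of an $\FM$-leaf inside $\M$ and show it must be all of $\M$ (projectively, i.e. up to the $\mathbb{R}^{+}$-scaling that commutes with everything). Since $\M$ has rank $1$, the $\GL$-action on $\M$ (after passing to the projectivized bundle, or equivalently fixing the area) is essentially an $\SL$-action whose orbit closures are well understood in rank $1$: they are themselves affine invariant suborbifolds, and the rank being $1$ forces a very rigid structure. The key geometric input encoded by property $\mathcal{P}$ should be the existence, on a dense set of surfaces in $\M$, of cylinder decompositions in which the ``twist + stretch'' operations one can perform while staying in $\M$, together with the Rel (isoperiodic) deformations, generate enough directions to move transversally to any proper closed invariant set.

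\smallskip

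Concretely, I would proceed as follows. \textbf{Step 1: Reduce to a minimal set.} Fix $(X,\omega)\in\M$ and let $Z$ be a minimal (closed, $\FM$-saturated, projectively closed) subset of $\overline{\FM_X}$; it suffices to prove $Z$ is projectively dense, since then $\overline{\FM_X}$ is. \textbf{Step 2: Produce a surface in $Z$ with a good cylinder decomposition.} Using property $\mathcal{P}$ — which I expect guarantees that $\M$, hence (after an $\SL$-ergodicity or density argument, e.g. Masur's criterion / minimality of the Teichmüller flow directions) the set $Z$, contains a surface admitting a cylinder decomposition of the special type described by $\mathcal{P}$. The point of that special type is that the cylinders can be regrouped so that the subgroup of $\mathrm{Rel}$ (isoperiodic) deformations preserving $\M$ acts on the moduli of the decomposition in an understood way. \textbf{Step 3: Combine Rel flow with the cylinder-stretch (the diagonal/unipotent part of $\GL$ adapted to the decomposition).} On a horizontally periodic surface, the $\mathrm{Rel}$ flow in the horizontal direction moves the relative periods of the saddle connections, and conjugating by the Teichmüller geodesic flow rescales these moves; taking a limit (a ``renormalization'' argument à la Hooper–Weiss / Calsamiglia–Deroin–Francaviglia) should show that $Z$, being closed and invariant, contains an entire $\mathrm{Rel}$-orbit direction's worth of surfaces and moreover is invariant under the horocycle or the full $\GL$-action in that configuration. \textbf{Step 4: Bootstrap to full $\GL$-invariance and then to $\M$.} Once $Z$ is shown to be $\GL$-invariant and $\FM$-saturated, $Z$ is an affine invariant suborbifold of $\M$ containing an $\FM$-leaf; since $\M$ has rank $1$ and is non absolute, the only such suborbifold of full ``Rel dimension'' is $\M$ itself (a proper affine invariant suborbifold would either drop the rank — impossible, rank $\geq 1$ — or drop the number of independent relative periods, contradicting that it contains a positive-dimensional piece of an $\FM$-leaf transverse to absolute periods). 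Hence $Z=\M$ projectively.

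\smallskip

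The main obstacle I anticipate is \textbf{Step 3}: turning the algebraic fact ``$\mathcal{P}$ gives a nice cylinder decomposition'' into the dynamical conclusion ``$Z$ absorbs a full $\GL$-orbit''. This is where one needs to carefully exploit the interaction between the isoperiodic flow and the affine structure: the deformations allowed by $\M$ on a $\mathcal{P}$-decomposition must be shown to generate, after renormalizing by $g_t=\mathrm{diag}(e^t,e^{-t})$, a group acting with dense orbits on the relevant parameter space (heights, twists, and the relative periods killed by the absolute period map). One must also handle the orbifold/commensurability subtleties and make sure the limiting surfaces stay in $\M$ and not merely in its $\GL$-closure — but since we start inside $\M$ and $\M$ is itself closed and $\GL$-invariant, that should be automatic. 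A secondary technical point is ensuring the minimal set $Z$ actually meets the locus of $\mathcal{P}$-decomposable surfaces; this should follow from density of periodic directions and the fact that property $\mathcal{P}$ is phrased so as to be ``visible'' on a set that any $\SL$-orbit closure must intersect.
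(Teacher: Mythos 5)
There is a genuine gap, in fact two. First, your Steps 1--2 need a surface with a $\mathcal{P}$-type decomposition to appear in the closure of an \emph{arbitrary} leaf (or in your minimal set $Z$), but property $\mathcal{P}$ only supplies a single surface somewhere in $\M$. Your justification --- that $Z$ must meet the locus of $\mathcal{P}$-decomposable surfaces because such a locus is ``visible'' to any $\SL$-orbit closure --- is circular: at that stage $Z$ is only closed and $\FM$-saturated, and establishing its $\SL$-invariance is precisely the point of the whole proof. The paper avoids this issue entirely by arguing in the opposite order: it proves projective density only for the leaf of the particular surface $(X,\omega)$ enjoying property $\mathcal{P}$, and then transports the conclusion to every other leaf using $g\cdot\FM_X=\FM_{g\cdot X}$ together with $\GL\cdot\FM_X=\M$ (topological transitivity of the $\GL$-action) and continuity. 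Your reduction to a minimal set is thus both unnecessary and, as set up, unusable.

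Second, your Step 3 --- which you yourself flag as the main obstacle --- is exactly the content of the paper's argument, and the mechanism there is quite different from the geodesic-flow renormalization you sketch. The paper works in the twist torus of the $\FM$-stable decomposition: it shows the space $\V(X)$ of twist parameters staying in $\M$ is \emph{rational} (because $\M$ is closed and rational subgroups of tori have dense images), while $\K(X)$, the twists staying in the leaf, has dimension equal to the Rel dimension $r$. Property $\mathcal{P}$ gives a minimal isoperiodic deformation $u\in\K(X)$ of positive degree; the rational hull $V_u$ of the line $\mathbb{R}u$ then cannot lie in $\K(X)$ (minimality of the support), so $\K(X)+V_u\supsetneq\K(X)$, and the rank-one bound $\dim\V(X)\le 1+r$ forces $\V(X)=\K(X)+V_u$. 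Equidistribution of the line $t\mapsto q(tu)$ in the subtorus $q(V_u)$ then puts the whole image $\tau\circ q(\V(X))$ --- in particular the horizontal horocycle orbit of $(X,\omega)$ --- inside $\overline{\FM_X}$. Upgrading from the horocycle to the full $\SL$-orbit is a separate step, done by producing in the leaf a surface fixed by a hyperbolic matrix (a closing-lemma statement, via \cite{wright2014field} and the transitivity of $\GL$ on leaves) and invoking Proposition 5.2 of \cite{hooper2015rel}; your outline has no substitute for either ingredient. Your Step 4 dimension count (an $\FM$-saturated, $\GL$-invariant closed subset of a rank-one non absolute $\M$ must be all of $\M$) is essentially sound but leans on the Eskin--Mirzakhani--Mohammadi structure theorem, which the paper does not need for Theorem \ref{A}. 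So while your proposal points at the right objects (twists, Rel, horocycles), the two decisive steps --- localizing the argument at the $\mathcal{P}$-surface and converting the positive-degree minimal deformation into horocycle, then $\SL$, invariance of the leaf closure --- are missing.
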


Here, projectively dense means that the leaf of any surface $q$ is dense in the locus of surfaces in $\M$ that have the same area as $q$. Equivalently, the projection of $\FM_q$ in $\mathbb{P}\M$ is dense. This has to do with the fact that the area of a translation surface can be computed only with its absolute periods and thus the area function is constant along the leaves of the $\M$-isoperiodic foliation. See Definition \ref{projectivedensity} for more details. The proof of Theorem \ref{A} relies on the fact that for a surface $q$ with a cylinder decomposition provided by property $\mathcal{P}$, certain  deformations tangent to $\FM$ accumulate on the horocycle orbit of $q$. These deformations are usually referred to as the Real Rel flows, see for instance \cite{hooper2015rel}. From this, we deduce that the horocycle orbit is actually contained in the closure of the leaf of $q$ and with a bit more work using the geodesic flow, we show that the whole $\SL$-orbit closure is, concluding the proof. We then give examples of affine invariant orbifolds that have property $\mathcal{P}$ and others that do not. Special attention is dedicated to Prym eigenforms of genus 2 and 3 and we show the following: 

\begin{theorem}\label{B}
Let $\kappa = (1,1), \ (2,2)^{odd}, (2,1,1)$ or $(1,1,1,1)$ and let $\M$ be a connected component of $\Omega E_D(\kappa)$. Then either all the leaves of $\FM$ are closed or all the leaves of $\FM$ are projectively dense. The last case occurs if, and only if, $D$ is not a square. 
\end{theorem}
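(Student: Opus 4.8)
The plan is to split into two cases according to whether $D$ is a square and to treat them by essentially independent arguments; since these cases are exhaustive and mutually exclusive, this simultaneously yields the dichotomy \emph{either all leaves closed or all projectively dense} and pins down when each alternative occurs.

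As a preliminary I would collect the structural input already available (McMullen's eigenform constructions together with Lanneau--Nguyen's study of Prym eigenforms in genus $3$): for each $\kappa\in\{(2,2)^{odd},(2,1,1),(1,1,1,1)\}$ the locus $\Omega E_D(\kappa)$ is an affine invariant orbifold of rank $1$, its connected components are classified, and it is non absolute (its rel dimension is positive, so the leaves of $\FM$ have positive dimension). This places every connected component $\M$ within the scope of Theorem~\ref{A} and lets us argue one component at a time.

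Suppose first that $D$ is not a square, so that $\mathcal{O}_D$ is an order in a real quadratic field. I would verify directly that $\M$ satisfies property $\mathcal{P}$ and then invoke Theorem~\ref{A} to conclude that every leaf of $\FM$ is projectively dense. Concretely, starting from the polygonal models and the one- and two-cylinder periodic directions of Prym eigenforms worked out by Lanneau--Nguyen, I would exhibit in each connected component a surface carrying a cylinder decomposition of the shape demanded by $\mathcal{P}$ and check that the corresponding $\M$-isoperiodic deformations are admissible; that these deformations preserve the eigenform condition uses precisely that $\mathcal{O}_D$ has no nontrivial idempotent, i.e.\ that $D$ is not a square.

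Suppose now that $D=e^2$ is a square. Then $\mathcal{O}_D\otimes\mathbb{Q}\cong\mathbb{Q}\times\mathbb{Q}$ carries a nontrivial idempotent, so the real multiplication splits the Prym part of $(X,\omega)$ up to isogeny as a product of two elliptic curves; this exhibits every $(X,\omega)\in\M$ as a branched cover of a pair of tori, compatibly with $\omega$, with combinatorial type locally constant on $\M$. Along an $\M$-isoperiodic leaf the absolute periods are frozen, hence so are the two base elliptic curves and the covering data, so the leaf is confined to the locus of such covers with prescribed discrete invariants, a properly embedded closed sub-orbifold of $\M$; thus every leaf is closed and, in particular, not projectively dense. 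Together with the previous paragraph this also shows the two alternatives cannot coexist within a fixed $\M$ and occur exactly as claimed. The crux of the whole argument is the non-square case: one must produce, uniformly across the three strata and across all connected components of the Lanneau--Nguyen classification, cylinder decompositions meeting the combinatorial constraints of property $\mathcal{P}$ together with admissible deformations remaining inside $\Omega E_D(\kappa)$ --- this is where genuine geometric input about Prym eigenforms enters, whereas the square case is largely algebraic once the splitting of the order is used, and the preliminary step is essentially bookkeeping with citations.
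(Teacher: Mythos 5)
Your overall architecture matches the paper's: prove property $\mathcal{P}$ in the non-square case and invoke Theorem \ref{A}, and handle the square case by exhibiting the surfaces as torus covers so that leaves become closed (this is the paper's Proposition \ref{counterexample}, which uses a continuous $\GL$-equivariant map to the base Veech locus and identifies each leaf as a connected component of a point fiber — a cleaner way to get closedness than your ``the leaf is confined to a closed locus, hence closed'', which as stated is a non sequitur). However, there is a genuine gap in the non-square case, which you yourself flag as the crux but do not carry out, and moreover your plan misidentifies where the hypothesis ``$D$ is not a square'' actually enters. Property $\mathcal{P}$ requires a horizontal $\FM$-stable decomposition together with a \emph{minimal} $\M$-isoperiodic deformation in $\K(X)$ of \emph{degree at least one}, i.e.\ with $\mathbb{Q}$-incommensurable twist coordinates. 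Preservation of the eigenform condition under these deformations has nothing to do with $D$ being non-square: as the paper notes, the condition cutting out Prym eigenforms among Prym forms involves only absolute periods, so it persists for every $D$. What non-squareness buys is non-arithmeticity, $k(\M)=\mathbb{Q}(\sqrt{D})\neq\mathbb{Q}$, and then Wright's criterion (Proposition \ref{field}) forces the exhibited minimal deformation, whose entries are circumference reciprocals up to signs, to have positive degree — otherwise all circumferences would be commensurable and $\M$ would be arithmetic. Your proposal never mentions checking this degree/incommensurability condition, which is the heart of property $\mathcal{P}$; producing a cylinder decomposition ``of the right shape'' with admissible deformations is not enough.

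Second, the verification itself is nontrivial and stratum-dependent, and cannot be dispatched by a uniform citation of Lanneau--Nguyen. For $(2,2)^{odd}$ and $(2,1,1)$ the paper does use their explicit model decompositions and writes down an explicit minimal deformation in $\K(X)$ compatible with the Prym involution. But for $(1,1,1,1)$ no such ready-made model is available: the paper must start from an arbitrary horizontally periodic surface, upgrade the decomposition to a stable one by disaligning singularities along $\FM$, analyze how the four fixed points of the Prym involution sit on core curves and boundaries (reducing to the case of exactly two fixed cylinders, possibly after passing to a new periodic direction via complete periodicity of rank $1$ orbifolds), and then run the combinatorial enumeration of Appendix \ref{list} to list all admissible diagrams and exhibit a minimal deformation of positive degree for each. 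Your plan contains no substitute for this classification step, so as written it does not establish property $\mathcal{P}$ in the stratum $\mathcal{H}(1,1,1,1)$, and hence does not prove the density half of the theorem there.
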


The space $\Omega E_D(\kappa)$ appearing in the statement is the Prym eigenform locus, discovered by McMullen, see \cite{mcmullen2007Prym}. The remaining connected components $\mathcal{H}(2)$, $\mathcal{H}^{odd}(4)$, $\mathcal{H}^{hyp}(4)$ and $\mathcal{H}(2,2)^{hyp}$ in genus 2 or 3 are not included in Theorem \ref{B} as the corresponding Prym eigenform loci are union of Teichmüller discs and the leaves have dimension 0, hence closed even when D is not a square. This is clear when there is only a single singularity. See Proposition 2.3 \cite{lanneau2014connected} for a proof of that claim in $\mathcal{H}^{hyp}(2,2)$. Finally, we use Theorem \ref{A} to compute the closure of some isoperiodic leaves:  

\begin{theorem}\label{C}
Let $q \in \Omega E_D(2,1,1)$ where $D$ is not a square. The leaf $\mathcal{F}_q$ of $q$ is projectively dense in $\mathcal{H}(2,1,1)$. 
\end{theorem}

We emphasize that in Theorem \ref{B}, we consider the isoperiodic foliation inside $\Omega E_D$ while in theorem \ref{C} we consider the whole isoperiodic foliation. If $\M$ denotes the connected component of $\Omega E_D(2,1,1)$ that contains $q$ then $\FM_q$ has complex codimension 1 in $\mathcal{F}_q$. Theorem \ref{C} gives, to our knowledge, the first examples of dense isoperiodic leaves in $\mathcal{H}(2,1,1)$.

\subsection{Organisation of the paper}

In Section \ref{framework}, we collect relevant definitions. We review the structures of the strata and of the closed $\GL$-invariant sets, recall the definitions of the Prym eigenform loci, and define the foliation $\FM$. In Section \ref{twists}, we introduce the twist map and define the spaces $\K$ and $\V$. In Section \ref{p}, we establish the criterion presented in Theorem \ref{A}. In Section \ref{examples}, we provide a list of affine invariant orbifolds that have property $\mathcal{P}$, and deduce Theorem \ref{B}. Finally, Section \ref{full} is dedicated to the proof of Theorem \ref{C}.

\subsection{Acknowledgements}

I am deeply indebted to Erwan Lanneau for introducing to the world of translation surfaces and suggesting the questions studied in this text, and many more. I would also like to thank Barak Weiss, Pascal Hubert, Duc-Manh Nguyen and Alex Wright for having discussed with me these results which have significantly benefited from their insightful comments. This work has been partially supported by the LabEx PERSYVAL-Lab (ANR-11-LABX-0025-01) funded by the French program Investissement d’avenir. 

\section{Framework}\label{framework}

\subsection{Period coordinates \& the action of $\GL$}

Let $S$ be a genus $g$ surface and let $\Sigma = \{z_1,\cdots,z_n\} \subset S$ be a finite set. We denote by $\tilde{\mathcal{H}}(\kappa)$ the space of equivalence classes of marked pointed translation surfaces $(q,f)$ where $f: S \to X_q$ is a homeomorphism such that $f(z_i) = \mathfrak{z}_q(i)$. Two marked pointed translation surfaces $(q_1,f_1)$ and $(q_2,f_2)$ are equivalent if there is a biholomorphism $\varphi : X_{q_1} \to X_{q_2}$ such that $\varphi^{\ast} \omega_{q_2} = \omega_{q_1}$, $\varphi \circ \mathfrak{z}_{q_1} = \mathfrak{z}_{q_2}$ and $f_2^{-1} \circ \varphi \circ f_1$ is homotopic to the identity of $S$ rel $\Sigma$. The following map is known as the period map : 

$$
\Phi : 
\begin{matrix}
\tilde{\mathcal{H}}(\kappa) &\to & H^1(S,\Sigma,\mathbb{C}) \\
(q,f) &\mapsto & (\gamma \mapsto \int_{f\circ \gamma} \omega_q)
\end{matrix}
$$

There is a complex structure on $\tilde{\mathcal{H}}(\kappa)$ that turns $\Phi$ into a local biholomorphism, and if $MCG(S,\Sigma)$ denotes the relative mapping class group of $S$ that fixes $\Sigma$ point-wise (\textit{i.e.} $\forall h \in MCG(S,\Sigma), \ h(z_i) = z_i$), then there is a right action of $MCG(S,\Sigma)$ on $\teich$ by precomposition: $(q,f) \cdot h = (q, f \circ h)$. The quotient set $\teich / MCG(S,\Sigma)$ is in bijection with $\moduli$ and the latter is endowed with the complex orbifold structure that turns the canonical projection $\pi : \tilde{\mathcal{H}}(\kappa) \to \mathcal{H}(\kappa)$ into a local biholomorphism (in the orbifold sense, see \cite{caramello2019introduction} for relevant definitions). The space $\tilde{\mathcal{H}}(\kappa)$ is endowed with a left $GL_2^{+}(\mathbb{R})$-action defined by : 
$$
\forall g \in \GL \ \Phi(g\cdot (q,f)) = g \cdot \Phi (q,f)
$$

where $\GL$ acts on $H^1(S,\Sigma,\mathbb{C})$ by post-composition. That action descends to an action on $\moduli$ in a way that the canonical projection $\pi$ is $\GL$-equivariant. We will use the following notations:

$$ 
g_t = \begin{pmatrix} e^t && 0 \\ 0  && e^{-t} \end{pmatrix}, \ \ h_t = \begin{pmatrix} 1 && t \\ 0  && 1 \end{pmatrix}
$$

The actions of $A=\{g_t \ | \ t \in \R \}$ and $U = \{h_t \ | \ t \in \R \}$ are known as the geodesic flow and horocycle flow. More details on the structure of these spaces and the action of $\GL$ can be found in \cite{zorich2006flat} or \cite{forni2013introduction}. 

\begin{dfn}[Affine invariant orbifold]\label{affinemanifold}
An affine invariant orbifold is a closed connected subset $\M$ of $\moduli$ obtained as the image of a complex orbifold $\mathfrak{M}$ by a proper immersion $\iota$ that satisfies the following property: if $\mathfrak{q} \in \mathfrak{M}$, there is an open set $\mathcal{U}$ around $\mathfrak{q}$, an orbifold chart $(\varphi,\mathcal{V},\Gamma)$ around $\iota(\mathfrak{q})$ and a $\mathbb{R}$-linear subspace $V$ of $H^1(S,\Sigma,\mathbb{R})$ such that: 

\begin{equation}\label{localmodel}
\iota(\mathcal{U})  = \varphi(\mathcal{V} \cap V \otimes \mathbb{C})
\end{equation}
\end{dfn}

We recall that an orbifold chart $(\varphi,\mathcal{V},\Gamma)$ around $q \in \moduli$ is the data of an open subset $\mathcal{V} \subset H^1(S,\Sigma,\mathbb{C})$ and a finite subgroup $\Gamma$ of $MCG(S,\Sigma)$ acting linearly on $H^1(S,\Sigma,\mathbb{C})$ together with a continuous $\Gamma$-invariant map $\varphi: \mathcal{V} \to \moduli$ such that the induced map $\varphi:\mathcal{V}/\Gamma \to \varphi(\mathcal{V})$ is a homeomorphism. We also recall that by definition of the orbifold structure on $\moduli$ we have $\Phi \circ \varphi = \pi$ whenever this is well defined. The term immersion refers to an immersion of orbifold, See \cite{caramello2019introduction} for relevant definitions. Affine invariant orbifolds are invariant under the action of $\GL$ and Eskin, Mirzakhani and Mohammadi proved in a celebrated result that any $\GL$-orbit closure is of that form. See \cite{eskin2015isolation}. We will say that a subspace $V$ as in the equation \eqref{localmodel} is a local model of $\M$ at $q$. An important numerical invariant associated to these affine invariant orbifolds is the rank, defined as follows: define $\rho :  H^1(S,\Sigma,\mathbb{C}) \to H^1(S,\mathbb{C})$ to be the canonical restriction map, and chose a local model $V$ of $\M$. Avila, Eskin and Möller proved in \cite{avila2017symplectic} that $\rho(V \otimes \mathbb{C})$ is a symplectic subspace of $H^1(S,\mathbb{C})$ (with respect to the intersection form). The rank of $\M$, denoted $rk(\M)$, is then defined as half the complex dimension of this space. More details can be found in \cite{wright2015cylinder}. The following definition will be important for the remainder of this text:

\begin{dfn}
Let $\M$ be a affine invariant orbifold. The field of definition of $\M$ is the smallest subfield $k(\M)$ of $\mathbb{R}$ such that any $V$ as in \eqref{localmodel} can be written as $V = V_0 \otimes_{k(\M)} \mathbb{R}$, where $V_0$ is a $k(\M)$-linear subspace of $H^1(S,\Sigma,k(\M))$
\end{dfn}

By definition, affine invariant orbifolds are cut out by real linear equations and the field of definition of $\M$ is thus the smallest subfield of $\mathbb{R}$ in which those linear equations take their coefficients. We shall say that an affine invariant orbifold is arithmetic if $k(\M)=\mathbb{Q}$ and nonarithmetic otherwise. Wright proved that the field of definition can be computed using periodic translation surfaces: let $q$ be a translation surface. The bilinear form $\omega_q \otimes \overline{\omega_q}$ induces a metric on $X_q$ that is flat away from the singularities of $\omega$. We say that $q$ is periodic in direction $\theta$ if the geodesic rays in direction $\theta$ are either periodic, or saddle connections (paths that start and end at singularities of $\omega_q$). It is well known that if a translation surface is periodic in a given direction, it is decomposed as a finite union of cylinders, which are subsets isometric to $[0,h] \times \mathbb{R} / c \mathbb{Z}$ and whose boundary components are union of saddle connections in direction $\theta$. Wright proved the following in \cite{wright2015cylinder}:

\begin{prop}\label{field}
Let $\M$ be an affine invariant orbifold and let $q$ be a periodic surface in $\M$ with $m$ cylinders. Then $k(\M)$ is contained in $\mathbb{Q}[c_2c_1^{-1},\cdots,c_mc_1^{-1}]$ where the $c_i$ are the circumferences of the cylinders of $q$.
\end{prop}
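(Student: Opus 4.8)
The plan is to combine Wright's Cylinder Deformation Theorem with the defining property of $k(\M)$: it is the smallest subfield of $\mathbb{R}$ over which a local model $V\subset H^1(S,\Sigma;\mathbb{R})$ of $\M$ can be written as $V=V_0\otimes_{k(\M)}\mathbb{R}$. Writing $k:=\mathbb{Q}[c_2c_1^{-1},\dots,c_mc_1^{-1}]$, it is enough to prove that $V$ is definable over $k$, that is, that $V$ is the $\mathbb{R}$-span of $V\cap H^1(S,\Sigma;k)$; the stated inclusion then follows by minimality of $k(\M)$.

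First I would rotate so that the given periodic direction is horizontal, giving horizontal cylinders $C_1,\dots,C_m$ of circumferences $c_1,\dots,c_m$, heights $h_1,\dots,h_m$ and core curves $\gamma_1,\dots,\gamma_m$. Composing the intersection pairing on $H_1(X;\mathbb{Z})$ with the inclusion $H^1(X;\mathbb{Z})\hookrightarrow H^1(X,\Sigma;\mathbb{Z})$ and pulling back through the marking produces integral classes $\gamma_i^\ast\in H^1(S,\Sigma;\mathbb{Z})$, and a direct computation shows that shearing $C_i$ alone by an absolute twist $\tau$ translates $\Phi(X,\omega)$ by $\tau\gamma_i^\ast$. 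Now the Cylinder Deformation Theorem says that the horizontal cylinders split into $\M$-parallel classes $\mathcal C_1,\dots,\mathcal C_r$; that within each $\mathcal C_t$ the core holonomies stay proportional under all deformations in $\M$, with proportionality constants that are ratios of the $c_i$ and hence lie in $k$; and that the standard simultaneous shear of $\mathcal C_t$ (shearing each $C_i\in\mathcal C_t$ by $t$ times its circumference) preserves $\M$. Differentiating this last deformation gives $\sum_{C_i\in\mathcal C_t}c_i\gamma_i^\ast\in V$, and since $V$ is a real subspace we may rescale by $c_1^{-1}$ to obtain the $k$-rational vectors $\sum_{C_i\in\mathcal C_t}(c_ic_1^{-1})\gamma_i^\ast\in V$.

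The main point is then to upgrade this to definability of $V$ over $k$. The vectors produced above span only an $r$-dimensional $k$-rational subspace of $V$, which is in general proper — for a whole stratum $V$ is everything while $r$ stays bounded — so I would invoke the structural part of Wright's analysis of periodic directions: it shows that $k(\M)$ is generated precisely by the proportionality constants of $\M$-parallel cylinders together with the rational intersection data of core curves, equivalently that the circumferences of any periodic decomposition of a surface in $\M$ span over $\mathbb{Q}$ a $k(\M)$-submodule of $\mathbb{R}$. Granting this, any $\lambda\in k(\M)$ satisfies $\lambda c_1=\sum_i q_ic_i$ with $q_i\in\mathbb{Q}$, so $\lambda=\sum_i q_i(c_ic_1^{-1})\in k$, and $k(\M)\subseteq k$ as required. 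The genuine obstacle is this structural step — showing that the linear equations cutting out $\M$ can be normalised so that their coefficients all lie among the cylinder-proportionality constants — and for that I would follow the argument of \cite{wright2015cylinder}.
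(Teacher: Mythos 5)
There is an important point of comparison to make first: the paper does not prove this proposition at all. It is stated as a quotation of Wright's work, with the surrounding sentences explicitly attributing both it and the stronger converse to \cite{wright2015cylinder}. Your proposal, stripped of its warm-up, amounts to the same citation: the step you yourself single out as ``the genuine obstacle'' --- that the $\mathbb{Q}$-span of the circumferences is a $k(\M)$-submodule of $\mathbb{R}$, equivalently that the defining equations of $\M$ can be normalised so their coefficients are circumference ratios --- is, after the one-line rearrangement $\lambda c_1=\sum_i q_i c_i\Rightarrow\lambda=\sum_i q_i(c_ic_1^{-1})$, precisely the statement of the proposition, and you defer it wholesale to Wright. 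So judged as a proof there is a genuine gap: everything of substance lives in the deferred step, and the reduction that precedes it buys essentially nothing.

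Moreover, the one concrete ingredient you do extract from the Cylinder Deformation Theorem is misquoted. The standard shear of an $\M$-parallel class $\mathcal{C}_t$ is the horocycle shear applied only to that class, so it twists each cylinder by $t$ times its \emph{height}; differentiating yields $\sum_{C_i\in\mathcal{C}_t}h_i\gamma_i^{\ast}\in V$, not $\sum_{C_i\in\mathcal{C}_t}c_i\gamma_i^{\ast}$. Twisting each cylinder by $t$ times its circumference does not in general stay in $\M$: for a closed $\GL$-orbit all horizontal cylinders form a single $\M$-parallel class, and pairing a putative tangent vector $a[\mathrm{Re}\,\omega]+b[\mathrm{Im}\,\omega]$ against the core curves forces $a=0$, while pairing against the cross curves then forces all moduli $c_j/h_j$ to be equal --- already false for the $3$-square $L$-shaped surface, whose horizontal cylinders have moduli $1/2$ and $1$. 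This is not a cosmetic slip for your plan: the height-weighted vectors that the theorem actually provides are not visibly $k$-rational, so even the preliminary step of exhibiting some $k$-rational vectors of $V$ is not obtained, and (as you concede) it would not have sufficed anyway since such vectors only span the classes' directions. The passage from height-weighted tangent data to control of circumference ratios is exactly the kind of work your sketch leaves entirely to \cite{wright2015cylinder}.
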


Wright actually proved a stronger result and proved the other inclusion, provided one considers only a subclass of cylinders. See \cite{wright2015cylinder} for more details. 

\subsection{Prym eigenform loci}

In this section, we recall a construction giving an infinite family of rank one affine invariant orbifolds discovered by McMullen. Let $q$ be a translation surface endowed with a holomorphic involution $\lambda$. We denote by $\Omega(X_q)$ the set of holomorphic $1$-forms on $X_q$, and by $\Omega^-(X_q)$ the sub-space of $\lambda$-anti-invariant holomorphic $1$-forms. We say that $q$ is a Prym form if $\omega_q \in \Omega^-(X_q)$, that is $\lambda^{\ast} \omega_q = - \omega_q$, and $dim \ \Omega^-(X_q) = 2$. The Prym variety $Prym(q,\lambda)$ is defined as the 2-dimensional abelian variety $(\Omega^-(X_q))^{\ast} / H_1^-(X_q,\mathbb{Z})$ endowed with the polarization coming from the intersection form on $H_1(X_q,\Z)$. Finally, let $D$ be a positive integer congruent to 0 or 1 mod 4 and let $\mathcal{O}_D \simeq \mathbb{Z}[X]/(X^2+bX+c)$ be the real quadratic order of discriminant $D = b^2-4c$. 

\begin{dfn}
A Prym eigenform is a Prym form $q$ corresponding to an involution $\lambda$ such that there is an injective ring morphism $\mathfrak{i} : \mathcal{O}_D \to End(Prym(q,\lambda))$ satisfying the following properties:

\begin{enumerate}
\item $\mathfrak{i}(\mathcal{O}_D)$ is a proper subring comprised of self-adjoint endomorphisms
\item $\omega_q$ is an eigenvector for the action of $\mathcal{O}_D$ on $\Omega(X_q)^{-}$
\end{enumerate}
\end{dfn}

We will denote by $\Omega E_D(\kappa)$ the set Prym eigenforms contained in the stratum $\moduli$. For more details, see \cite{mcmullen2007Prym}. McMullen proved the following:

\begin{prop}
Let $\M$ be a connected component of $\Omega E_D(\kappa)$. Then $\M$ is a rank one affine invariant orbifold whose field of definition is $\Q(\sqrt{D})$.  
\end{prop}

The Prym eigenforms play a crucial role in McMullen's classification of affine invariant orbifolds in genus $2$. It is proved in \cite{mcmullen2007dynamics} that if the orbit of a surface is neither closed nor dense in the stratum in which it belongs, then it is a Prym eigenform, the Prym involution being given by the hyperelliptic involution. In \cite{mcmullen2007Prym}, infinite families of Prym eigenforms are constructed in genus up to $5$, and it is a consequence of Riemann-Hurwitz that Prym eigenforms cannot exist in genus bigger than 5.  

\subsection{The $\M$-isoperiodic foliation}

The aim of this subsection is to define a foliation on any affine invariant orbifold. In that perspective, let us first recall the definition of the usual isoperiodic foliation $\mathcal{F}$: the map $\rho \circ \Phi$ is a $MCG(S,\Sigma)$-equivariant submersion. The isoperiodic foliation is defined as the quotient foliation of the foliation of $\teich$ by connected components of the level sets of $\rho \circ \Phi$. Equivalently, for any orbifold chart $(\varphi,\mathcal{U},\Gamma)$ around $q$ with $\varphi(\xi)=q$, we have $\varphi(\mathcal{U} \cap \xi + \mathrm{ker}\rho) = \varphi(\mathcal{U}) \cap \mathcal{F}_q$. We are going to adapt this construction. Let $\M$ be an affine invariant orbifold and $q$ be a surface in $\M$. We denote by $\FM_q$ the connected component of $\mathcal{F}_q \cap \M$ that contains $q$.

\begin{prop}\label{foliation} Let $\iota : \mathfrak{M} \to \moduli$ be an immersion such that $\iota(\mathfrak{M}) = \M$ as in definition \ref{affinemanifold}. There is a foliation $\mathfrak{F}$ on $\mathfrak{M}$ such that for any $q \in \M$ and $\mathfrak{q} \in \mathfrak{M}$ such that $\iota(\mathfrak{q}) = q$, there is a neighborhood $\mathcal{U}$ around $\mathfrak{q}$ in $\mathfrak{M}$ such that:

$$
\iota(\mathcal{U}\cap \mathfrak{F}_{\mathfrak{q}}) = \iota(\mathcal{U})\cap \FM_q
$$

\end{prop}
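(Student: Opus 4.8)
The plan is to construct the foliation $\mathfrak{F}$ on $\mathfrak{M}$ by pulling back, in local charts, the linear foliation of $V\otimes\mathbb{C}$ whose leaves are the intersections of $V\otimes\mathbb{C}$ with the affine subspaces $\ker(\rho)$-translates, and then check that this local data glues and that it matches $\FM_X$ under $\iota$. Concretely, I would work with the map $\hat\iota:\hat{\mathfrak M}\to\teich$ and the local biholomorphism $\Phi\circ\hat\iota$ onto $V\otimes\mathbb{C}$ provided by Proposition \ref{characterisation}. On $V\otimes\mathbb{C}$ consider the restriction $\rho|_{V\otimes\mathbb{C}}:V\otimes\mathbb{C}\to H^1(S,\mathbb{C})$: this is a $\mathbb{C}$-linear map, so its fibers are parallel affine subspaces of $V\otimes\mathbb{C}$ and define a genuine (linear, hence in particular smooth) foliation $\mathcal{G}$ of $V\otimes\mathbb{C}$. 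Pulling $\mathcal{G}$ back by the local biholomorphism $\Phi\circ\hat\iota$ gives a foliation $\hat{\mathfrak F}$ on $\hat{\mathfrak M}$. The first routine verification is that $\hat{\mathfrak F}$ is exactly the foliation of $\hat{\mathfrak M}$ by connected components of level sets of $\rho\circ\Phi\circ\hat\iota$, since $\rho\circ\Phi\circ\hat\iota = (\rho|_{V\otimes\mathbb{C}})\circ(\Phi\circ\hat\iota)$ and $\Phi\circ\hat\iota$ is a local biholomorphism.

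Next I would push $\hat{\mathfrak F}$ down to $\mathfrak{M}$. Since $p:\hat{\mathfrak M}\to\mathfrak M$ is an orbifold covering and the foliation $\hat{\mathfrak F}$ is defined via $\rho\circ\Phi\circ\hat\iota$, I need that this foliation is invariant under the deck group of $p$; this follows because $\pi\circ\hat\iota=\iota\circ p$ together with the $MCG(S,\Sigma)$-equivariance of $\rho\circ\Phi$ forces the deck transformations to act on $V\otimes\mathbb{C}$ by linear maps commuting with $\rho$ (up to the identifications), so they preserve $\mathcal{G}$ and hence $\hat{\mathfrak F}$. Therefore $\hat{\mathfrak F}$ descends to a well-defined foliation $\mathfrak F$ on the orbifold $\mathfrak M$. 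This is the only place where a little care with the orbifold structure is needed, and it is the step I expect to be the main (albeit mild) obstacle: one must make sure the local linear models are compatible with the orbifold charts, which is exactly what Definition \ref{affinemanifold} and Proposition \ref{characterisation} are designed to guarantee.

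Finally I would verify the compatibility statement $\iota(\mathcal U\cap\mathfrak F_x)=\iota(\mathcal U)\cap\FM_X$. Shrinking $\mathcal U$ if necessary, $\iota|_{\mathcal U}$ is an embedding onto a relatively open piece of $\M$ contained in a single orbifold chart $(\mathcal V,\varphi)$, and under $\varphi$ this piece is identified with an open subset of $V\otimes\mathbb{C}$ on which $\rho\circ\Phi$ becomes $\rho|_{V\otimes\mathbb{C}}$. On one hand, $\mathcal F_X$ is the connected component through $(X,\omega)$ of a level set of $\rho\circ\Phi$ in $\teich$, so $\mathcal F_X\cap\varphi(\mathcal V)$ is, locally, $\varphi\big(\mathcal V\cap(c+\ker\rho)\big)$ for the appropriate $c$, and $\FM_X$, being the connected component of $\mathcal F_X\cap\M$ through $(X,\omega)$, is locally $\varphi\big(\mathcal V\cap V\otimes\mathbb{C}\cap(c+\ker\rho)\big)$. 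On the other hand $\mathcal U\cap\mathfrak F_x$ is by construction the connected component through $x$ of the level set of $\rho\circ\Phi\circ\hat\iota$, i.e. of $(\rho|_{V\otimes\mathbb{C}})\circ(\Phi\circ\hat\iota)$; applying $\iota$ and using $\pi\circ\hat\iota=\iota\circ p$ transports this to exactly the same local picture $\varphi\big(\mathcal V\cap V\otimes\mathbb{C}\cap(c+\ker\rho)\big)$. Comparing the two identifications gives the claimed equality on $\mathcal U$. This last part is a diagram chase once the local normal forms are in place, so I would keep it brief.
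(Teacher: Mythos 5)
Your proposal is correct and follows essentially the same route as the paper: you build the foliation from the fibers of $\rho$ pulled back through the local biholomorphism $\Phi\circ\hat{\iota}$ on $\hat{\mathfrak{M}}$ (equivalently, the connected components of level sets of $\rho\circ\Phi\circ\hat{\iota}$), descend it to $\mathfrak{M}$ by equivariance under the orbifold fundamental group acting through $MCG(S,\Sigma)$, and then verify $\iota(\mathcal{U}\cap\mathfrak{F}_x)=\iota(\mathcal{U})\cap\FM_X$ by the same local normal-form computation $\varphi(\mathcal{V}\cap V\otimes\mathbb{C}\cap(\xi+\ker\rho))$ that the paper uses. No gaps beyond the level of detail the paper itself leaves implicit.
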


\begin{proof}
Let $\mathfrak{q} \in \mathfrak{M}$ and let $(\varphi_1,\mathcal{U}_1,\Gamma_1)$ be an orbifold chart around $\mathfrak{q}$. We recall that this means that $\varphi_1:\mathcal{U}_1 \to \mathfrak{M}$ is a continuous $\Gamma_1$-equivariant map and that the induced map $\varphi_1:\mathcal{U}_1/\Gamma_1 \to \varphi_1(\mathcal{U}_1)$ is a homeomorphism that contains $\mathfrak{q}$ in its image. Let also $(\varphi_2, \mathcal{U}_2,\Gamma_2)$ be a chart around $q = \iota(\mathfrak{q})$ and, if $\mathcal{U}_1$ is small enough, there is a $V \in H^1(S,\Sigma,\mathbb{C})$ such that $\iota\circ \varphi_1 (\mathcal{U}_1) = \varphi_2(\mathcal{U}_2 \cap V \otimes \mathbb{C})$, as in the Definition \ref{affinemanifold}. By definition of being an immersion, up to taking even smaller $\mathcal{U}_1$ and $\mathcal{U}_2$, there is a smooth local lift $\tilde{\iota}:\mathcal{U}_1 \to \mathcal{U}_2$ such that $\iota \circ \varphi_1 = \varphi_2 \circ \tilde{\iota}$ and the differential of $\tilde{\iota}$ at any point of $\mathcal{U}$ is injective. Up to shrinking again the open sets and replacing $\tilde{\iota}$ by $\gamma \cdot \tilde{\iota}$ for some $\gamma \in \Gamma_2$ if necessary, we can assume that the image of $\tilde{\iota}$ is contained in $\mathcal{U}_2 \cap V\otimes \mathbb{C}$. Up to shrinking one last time $\mathcal{U}_1$ and $\mathcal{U}_2$, we can assume by the Local Inversion Theorem that $\tilde{\iota}$ is actually a diffeomorphism from $\mathcal{U}_1$ to $\mathcal{U}_2 \cap V\otimes \mathbb{C}$. We thus get a foliation $\mathfrak{F}_{\mathcal{U}_1}$ of $\mathcal{U}_1$ by level sets of $\rho \circ \tilde{\iota}$ whose leaves are permuted by the action of $\Gamma_1$. Indeed, for any $\gamma_1 \in \Gamma_1$ there is a $\gamma_2 \in \Gamma_2$ such that $\tilde{\iota} \circ \gamma_1 = \gamma_2 \circ \tilde{\iota}$ and $\gamma_2$ acts trivially on $\mathrm{ker}\rho$. Let $\hat{\mathfrak{q}} \in \mathcal{U}_1$, $\mathcal{U} = \varphi_1(\mathcal{U}_1)$ and compute:  

\begin{align*}
    \iota \circ \varphi_1(\mathfrak{F}_{\mathcal{U}_1,\hat{\mathfrak{q}}}) &= \varphi_2 \circ \tilde{\iota}(\mathfrak{F}_{\mathcal{U}_1,\hat{\mathfrak{q}}}) \\
    &= \varphi_2(\mathcal{U}_2 \cap V \otimes \mathbb{C} \cap \mathrm{ker} \rho + \tilde{\iota}(\hat{\mathfrak{q}})) \\
    &= \varphi_2(\mathcal{U}_2 \cap V \otimes \mathbb{C}) \cap \varphi_2(\mathcal{U}_2 \cap \mathrm{ker} \rho + \tilde{\iota}(\hat{\mathfrak{q}})) \\
    &= \iota\circ \varphi_1 (\mathcal{U}_1) \cap \varphi_2(\mathcal{U}_2) \cap \mathcal{F}_q = \iota(\mathcal{U}) \cap \FM_q 
\end{align*}

To sum up the construction we just made, we found an orbifold chart $(\mathcal{U},\varphi,\Gamma)$ around any $\mathfrak{q} \in \mathfrak{M}$ such that $\mathcal{U}$ is endowed with a $\Gamma$-invariant foliation $\mathfrak{F}_{\mathcal{U}}$ such that for any $\hat{\mathfrak{q}} \in \mathcal{U}$, we have $\iota \circ \varphi (\mathfrak{F}_{\mathcal{U},\hat{q}}) = \iota \circ \varphi(\mathcal{U}) \cap \FM_q$ where $q = \iota \circ \varphi(\hat{q})$. It is easily verified that these foliated charts are compatible, this is essentially due to the fact that two different local lifts of $\iota$ differ by an element of $MCG(S,\Sigma)$, which acts trivially on $\mathrm{ker}\rho$. We denote by $\mathfrak{F}$ the induced foliation of $\mathfrak{M}$. By construction, the leaves of this foliation satisfy the equation of the statement. 

\end{proof}

Given the nature of Proposition \ref{foliation}, we shall say that the $\FM_q$ are the leaves of an immersed foliation, which will be referred to as the \textbf{$\M$-isoperiodic foliation} and will denote by $\FM$. 

\medskip 

\textbf{Examples.} 

\begin{enumerate}

\item If $\M=\moduli$, the foliation $\FM$ is the usual isoperiodic foliation (or Kernel foliation or Rel foliation).
\item Let $\M$ and $\N$ be two affine invariant orbifolds with $\M \subset \N$. Then the $\M$-isoperiodic leaf of any surface of $\M$ is contained in its $\N$-isoperiodic leaf. 
\item Let $q$ be a flat cover of a Veech surface $q_0$ that ramifies over the singularities of $\omega_{q_0}$ and a non periodic point. Let $\M$ be the $\GL$-orbit closure of $q$. Then the $\M$-isoperiodic leaf of any surface in $\M$ is a finite flat cover of the surface $q_0$ punctured at the singularities of $\omega_{q_0}$.
\item Let $\M$ be a Teichmuller disc. Then the $\M$-isoperiodic leaf of any surface $q \in \M$ is the singleton $\{q\}$.     
\item Let $\M = \Omega E_D(\kappa)$. Let $q \in \M$ and let $\lambda$ be the Prym involution of  $q$. Then $\FM_q$ is modeled on $\mathrm{ker}(\lambda^{\ast}+id) \cap \mathrm{ker}(\rho) \subset  H^1(S,\Sigma,\mathbb{C})$. See Proposition \ref{localmodelleafforeigen}.
\end{enumerate}

Notice that if $V$ is a local model of $\M$, then the space $V \cap \mathrm{ker}\rho$ does not depend on $V$. This is due to the fact that $MCG(S,\Sigma)$ preserves $\Sigma$ point-wise and thus acts trivially on $\mathrm{ker}\rho$ together with the fact that $\M$ is connected. We denote this space by $\frakK$. The complex linear space $\frakK \otimes \mathbb{C}$ is canonically identified with the tangent space of the leaves of $\FM$ in period coordinates. We can also define $\rho_q : H^1(X_q,\Sigma_q,\mathbb{C}) \to H^1(X_q,\mathbb{C})$. Since $MCG(S,\Sigma)$ does not permute $\Sigma$, the space $\mathrm{ker}\rho$ is canonically isomorphic to $\mathrm{ker}\rho_q$ via any choice of marking on $q$. We will use thus identify these spaces from now on. 

\begin{dfn}
An affine invariant orbifold $\M$ is said to be nonabsolute when leaves of $\FM$ have positive dimension. This is equivalent to the fact $\frakK$ has positive dimension. 
\end{dfn}

By definition of the rank, an affine invariant orbifold $\M$ is nonabsolute whenever $dim_{\mathbb{C}}(\M) > 2rk(\M)$. As an example, we have the following: 

\begin{prop}\label{localmodelleafforeigen}
Let $\M$ be a connected component of $\Omega E_D(\kappa)$, let $q \in \M$ and let $\lambda$ be the prym involution on $q$, then $\frakK$ is equal to $\mathrm{ker}\rho_q \cap \mathrm{ker}(\lambda^{\ast} + id)$. 
\end{prop}

We omit the proof of Proposition \ref{localmodelleafforeigen} as it folklore and a bit technical though easily understandable: it simply means that for an isoperiodic deformation of $q$ to remain in $\M$, it is enough that the Prym involution is preserved along the deformation. This is due to the fact that being an eigenform for $\mathcal{O}_D$ is a condition that only involves the absolute periods. Write $| \kappa | = n^+ + n^-$ where $n^+$ is the number of zeroes fixed by the Prym involution that defines $\M$. In particular $n^-$ is even and we claim that $\frakK$ has dimension $\frac{n^-}{2}$. Indeed, let $q \in \M$, let $\lambda$ be the corresponding Prym involution and let $z \in \Sigma_q$. Let $\gamma_z$ be a homotopically trivial loop that goes around $z$ clockwise and let $v_{z}= I(\gamma_z - \lambda_{\ast} \gamma_z,\cdot) \in \mathrm{ker}\rho_q$, where $I: H_1(X_q - \Sigma_q,\mathbb{Z}) \times H_1(X_q,\Sigma_q,\mathbb{Z}) \to \mathbb{Z}$ is the intersection product. The fact that $v_z \in \mathrm{ker}\rho_q$ comes from the fact that $\gamma_z$ is homotopically trivial. Notice that $v_z = 0$ if, and only if, the point $z$ is fixed by $\lambda$, that $\lambda^{\ast} v_z = -v_z$ and that the $v_z$ are a generating family of $\mathrm{ker}\rho_q \cap \mathrm{ker}(\lambda^{\ast} + id)$ whose span has dimension $\frac{n^-}{2}$. We conclude using Proposition \ref{localmodelleafforeigen}. 

\medskip 

The $\M$-isoperiodic foliation behaves nicely with respect to the action of $\GL$, as shows the following:

\begin{prop}\label{product}
Let $\mathcal{M}$ be a nonabsolute rank one affine invariant orbifold, let $q$ be a translation surface in $\mathcal{M}$ and let $g \in \GL$. The following formulas holds : 

\begin{equation}\label{commutation}
g \cdot \mathcal{F}^{\mathcal{M}}_q = \mathcal{F}^{\mathcal{M}}_{g\cdot q}
\end{equation} 

and 

\begin{equation}\label{product2}
\GL \cdot \FM_{q} = \M
\end{equation}
\end{prop}

\begin{proof}
Equation \eqref{commutation} is a consequence of the fact the Period map is $\GL$-equivariant. For equation \eqref{product2}, let $q_1$ be a translation surface in $\M$. Notice that the sets $\GL \cdot \FM_{q}$ and $\GL \cdot \FM_{q_1}$ both contain an open set and since the action of $\GL$ on $\M$ is topologically transitive, we deduce that there is a matrix $g \in \GL$ such that $g \cdot \FM_q \cap \FM_{q_1}$ is non empty. But by \eqref{commutation} $g \cdot \FM_q = \FM_{g \cdot q}$ and thus $g \cdot \FM_q = \FM_{q_1}$. This concludes. 
\end{proof}

Recall that the area of a surface $q \in \moduli$ is defined as the integral of $\frac{i}{2} \cdot \omega_q \wedge \overline{\omega_q}$ over $q$. It is well known that the area of $q$ only depends upon the integral of $\omega$ over the absolute cycles, and thus the area function is constant along the leaves of the $\M$-isoperiodic foliation. This leads to the following definition. 

\begin{dfn}\label{projectivedensity}
Let $\M$ be a nonabsolute affine invariant orbifold and let $q$ be a surface in $\M$. The leaf $\FM_q$ is said to be projectively dense if it is dense in the set of surfaces in $\M$ that have the same area as $q$.
\end{dfn}

We end this section by recalling that there is a family of flows, called rel flows, on $\moduli$ that are tangent to the isoperiodic foliation. They are obtained as follows: any $v \in \mathrm{ker}\rho$ induces a constant vector field on $H^1(S,\Sigma,\mathbb{C})$, which can be pulled back to a vector field on $\teich$ by the period map $\Phi$. Since the action of $MCG(S,\Sigma)$ preserves point-wise $\Sigma$, this vector field descends to a well-defined vector field on $\moduli$. We denote by $Rel_v^t$ the corresponding flow. See Section 4 \cite{bainbridge2016horocycle} for more details on rel flows and the isoperiodic foliation (called Rel foliation in the reference). If $v \in \frakK \otimes \mathbb{C}$, the flow $Rel_v^t$ preserves $\M$ by construction. If $v \in \frakK$, we say that $Rel_v^t$ is a real rel flow. 

\section{Modifying the twist parameters}\label{twists}

Let $q \in \moduli$ be a horizontally periodic surface. The purpose of this section is to analyze some deformations of the surface $q$ that preserve its cylinder decomposition. This will allow us to have a better understanding of its horocycle and real rel trajectories and will lead to the formulation of our property $\mathcal{P}$.

\subsection{The twist map}

Suppose $q$ is decomposed into $m$ horizontal cylinders, which we denote by $\mathcal{C}_1, \cdots, \mathcal{C}_m$. We denote by $\gamma_i$ the core curve of the cylinder $\mathcal{C}_i$. One can define a smooth map (in the orbifold sense) $\tau$ from $\mathbb{R}^m$ to $\moduli$ in the following way: choose $(x_1,\cdots,x_m)$ in $\mathbb{R}^m$, cut open the surface along the core curves of its cylinders, rotate the top component of each cylinder $\mathcal{C}_i$ by an amount of $x_ic_i$ and glue back. This construction yields a smooth embedding from the $m$-dimensional torus to $\moduli{}$. See figure 1. This embedding is of particular interest as it allows to see Rel Rel trajectories of horizontally periodic surfaces as linear flows, of which we have a detailed understanding. See Proposition \ref{intertwined}.

\begin{figure}[h]
    \centering
\begin{tikzpicture}
    \begin{scope}[shift={(0,0)}]
    \draw (1,0) rectangle (2,0.5);
    \draw (0,.5) rectangle (2,1);
    \draw (0,1) rectangle (1,1.5);
    \draw (1.5,.25) node{3};
    \draw (1,.75) node{2};
    \draw (.5,1.25) node{1};
    \draw (1,-.5) node{$q = \tau(0,0,0)$};
    \draw[>=latex,->](2.5,.75) .. controls +(45:.5) and +(135:.5) .. (4.5,.75) ;
    \end{scope}
    
    \begin{scope}[shift={(5,0)}]
    \draw (1,0.5) -- (.5,0) -- (1.5,0) -- (2,.5);
    \draw (0,.5) rectangle (2,1);
    \draw (0,1) -- (1,1.5) -- (2,1.5) -- (1,1);
    \draw (1,-.5) node{$\tau(1,0,1/2)$};
    \draw[fill=black,opacity = .1] (1,1) -- (1,1.5) -- (2,1.5) -- cycle;
    \draw[dashed] (1,1) -- (1,1.5);
    \end{scope}
    
    \begin{scope}[shift={(8,0)}]
    \draw (-.5,0.75) node{$=$};
    \draw (1,0.5) -- (.5,0) -- (1.5,0) -- (2,.5);
    \draw (0,.5) rectangle (2,1);
    \draw (0,1) rectangle (1,1.5);
    \draw (1,-.5) node{$\tau (0,0,1/2)$};
    \draw[fill=black,opacity = .1] (0,1) -- (0,1.5) -- (1,1.5) -- cycle;
    \draw[dashed] (0,1) -- (1,1.5);
    \end{scope}

    \end{tikzpicture}
	 \caption{twisting cylinders}
    \label{twistmap}
\end{figure}
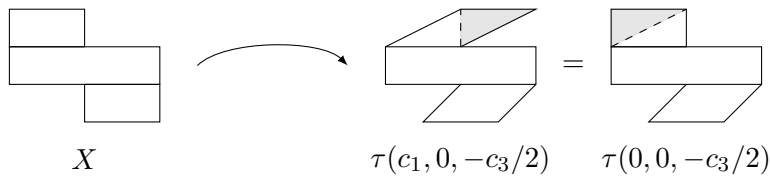

\medskip

More precisely, let $f : S \to X_q$ be a marking on $q$, and define $\xi = \Phi(q,f)$. Recall that the algebraic intersection induces a non-degenerate form $I : H_1(S-\Sigma,\mathbb{Z}) \times H_1(S,\Sigma,\mathbb{Z}) \to \mathbb{Z}$. The core curves of the cylinders can be seen as elements of $H_1(S-\Sigma,\mathbb{Z})$ and we denote by $C_i^{\ast}$ the cocycle $I(f^{-1}\circ \gamma_i,\cdot)$. Notice that the $C_i^{\ast}$ are linearly independent and define $E$ to be the linear subspace of $H^{1}(S,\Sigma,\mathbb{R})$ spanned by the $C_i^{\ast}$ and finally set $\mathcal{C}$ to be the arc-wise component of $\{(q',f') \in \teich \ | \ \Phi(q',f') \in \xi + E\}$ that contains $(q,f)$. 

\begin{prop}
The period map induces a diffeomorphism from $\mathcal{C}$ to $\xi + E$. 
\end{prop}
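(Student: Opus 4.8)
The plan is to show that $\Phi$ restricted to $\mathcal{C}$ is both a local diffeomorphism and a bijection onto $\xi + E$; since $\mathcal{C}$ is arcwise connected by definition, surjectivity follows from establishing that the image is open and closed in $\xi + E$. The subspace $E$ is spanned by the cocycles $C_i^* = I(f^{-1}\circ\gamma_i, \cdot)$, which by construction vanish on absolute cycles that are disjoint from all core curves; more importantly, the deformations in the direction of $E$ are exactly those realized geometrically by the cut-rotate-reglue operation, so the twist map $\tau : \mathbb{R}^m \to \moduli$ of the preceding paragraph (suitably lifted to $\teich$) provides an explicit geometric parametrization of a neighborhood of $(X,\omega,f)$ inside $\mathcal{C}$.

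First I would observe that $\Phi$ is a local biholomorphism on all of $\teich$, hence its restriction to the affine submanifold $\mathcal{C}$ (which is cut out locally by the affine condition $\Phi(Y,\theta,h) \in \xi + E$) is automatically a local diffeomorphism onto $\xi + E$. In particular $\Phi(\mathcal{C})$ is open in $\xi + E$. The content of the proposition is therefore the global injectivity and the fact that the image is all of $\xi + E$. For injectivity, I would argue that if two marked surfaces in $\mathcal{C}$ have the same period data, then since $\Phi$ is a local homeomorphism and $\mathcal{C}$ is a connected component of a level set of the real-linear map $(Y,\theta,h) \mapsto [\Phi(Y,\theta,h)] \in H^1(S,\Sigma,\mathbb{R})/E$, a monodromy/covering-space argument identifies them; alternatively, one shows directly that the twist map $\tau$ is injective on the relevant torus and that $\mathcal{C}$ coincides with the image of a lift of $\tau$ composed with the $\mathbb{R}$-span directions, making the bijection transparent.

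For surjectivity, the key geometric input is that one can explicitly realize any target $\xi + v$ with $v \in E$: writing $v = \sum t_i C_i^*$, the operation of shearing the top of cylinder $\mathcal{C}_i$ by the amount $t_i c_i$ changes the period of a relative cycle $\delta$ by exactly $\sum_i I(\gamma_i, \delta)\, t_i c_i = \langle v, \delta\rangle$ (up to the normalization absorbed into the $c_i$), so the sheared surface has period coordinate $\xi + v$ and stays in the same Teichmüller space (the underlying topological surface and marking are unchanged). This exhibits a continuous path in $\mathcal{C}$ from $(X,\omega,f)$ to a surface with any prescribed period in $\xi + E$, so $\Phi(\mathcal{C}) = \xi + E$. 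Combined with local injectivity this gives that $\Phi|_{\mathcal{C}}$ is a covering map onto the simply connected affine space $\xi + E$, hence a diffeomorphism.

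The main obstacle I anticipate is the bookkeeping around the shearing operation near the boundary of the region of validity: a single shear of cylinder $\mathcal{C}_i$ is always defined (it is just a twist, and the cylinder structure persists), so unlike Rel deformations there is no degeneration issue — this is precisely why working with the twist map rather than with isoperiodic flow is advantageous here. The one subtle point is verifying that the arcwise-connected component $\mathcal{C}$ is genuinely diffeomorphic (not merely immersed) onto $\xi + E$, i.e. that no nontrivial element of $MCG(S,\Sigma)$ can act trivially on $\xi + E$ while moving points of $\mathcal{C}$; this follows because such an element would have to fix the period coordinate pointwise and the $MCG$-action on $\teich$ is (almost) free, but it deserves the explicit remark that $\mathcal{C}$ lives in $\teich$ rather than in $\moduli$.
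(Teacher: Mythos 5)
Your geometric ingredients are the right ones, and indeed the only ones available: $\Phi$ is a local biholomorphism, so $\Phi|_{\mathcal{C}}$ is a local diffeomorphism with open image in $\xi+E$; cylinder shears are defined for all time, so every point $\xi+v$ with $v\in E$ is realized by a twist of $(X,\omega,f)$ along a path that stays in $\mathcal{C}$. (For what it is worth, the paper states this proposition without proof and only points to the twist-space construction in Hooper--Weiss, so there is no in-text argument to compare with; the expected proof is exactly the twist-map argument you sketch.)

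The gap is in the step that is supposed to give global injectivity. A surjective local homeomorphism onto a simply connected space is \emph{not} automatically a covering map: without properness or a path-lifting property the implication ``covering, hence diffeomorphism'' fails (delete part of one sheet of a trivial covering of an interval to see this). The statement you wave at as the alternative --- that $\mathcal{C}$ coincides with the image of the twist map --- is precisely the missing content, not a remark. What closes the argument is a second use of completeness of the shears: the globally defined map $\tilde{\tau}\colon\mathbb{R}^m\to\teich$ satisfies $\Phi\circ\tilde{\tau}(x)=\xi+\sum_i c_ix_iC_i^{\ast}$, and its image is open in $\Phi^{-1}(\xi+E)$ (a point of the level set near $\tilde{\tau}(x)$ has period $\xi+v'$ with $v'$ close to $\sum c_ix_iC_i^{\ast}$, which is also realized by a nearby twist; local injectivity of $\Phi$ forces the two points to coincide) and closed in it (if $\tilde{\tau}(x_n)\to q$ then the periods converge in the closed set $\xi+E$ and continuity of the twist construction exhibits $q$ as a twist image), so by connectedness it equals $\mathcal{C}$. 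Injectivity also needs an argument you do not give, and your claim that ``the twist map is injective on the relevant torus'' is false as stated when the $C_i^{\ast}$ are linearly dependent (which can happen: $E$ is only the span). One must show that twisting by $x$ with $\sum_i c_ix_iC_i^{\ast}=0$ returns the same marked surface; this follows by observing that along the segment $t\mapsto\tilde{\tau}(tx)$ the period is constant, so by local injectivity of $\Phi$ the set of $t$ where the surface equals $\tilde{\tau}(0)$ is open and closed. With that, $\tilde{\tau}$ descends to a bijection $E\to\mathcal{C}$ inverse to $\Phi|_{\mathcal{C}}-\xi$, and the proposition follows. (Minor point: the level map you invoke should take values in the quotient of $H^1(S,\Sigma,\mathbb{C})$ by the real subspace $E$, not of $H^1(S,\Sigma,\mathbb{R})$.)
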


Now, define a map $\tilde{\tau}$ from $\mathbb{R}^m$ to $\teich$ that sends $(x_1,\cdots,x_m)$ to the surface of $\mathcal{C}$ whose image by the period map is $\xi+ \sum c_ix_iC_i^{\ast}$. In particular, notice that $\tilde{\tau}(0)=(q,f)$. The action of $\mathbb{Z}^m$ on $\mathbb{R}^m$ by translation is intertwined by $\tilde{\tau}$ with the action of the subgroup of $MCG(S,\Sigma)$ generated by Dehn twists about the core curves of the cylinders. There is also a finite subgroup $\Delta$, perhaps trivial, of automorphisms of $q$ that acts on $q$ by permuting cylinders and on the $m$-dimensional torus $\mathbb{T}^m$ by permuting the coordinates. Consequently, there is a map $\tau : \mathbb{T}^m / \Delta$ to $\moduli$ that fits into the commutative diagram depicted in figure \ref{commutativity}. We shall refer to the map $\tau$ as the twist map associated to $q$. Notice that by construction $\Phi \circ \tilde{\tau}$ is affine with constant part is $\xi$ and is injective. For more details on this construction, the reader is referred to Section 6.2 of \cite{hooper2015rel}.  

\begin{figure}[h]
    \centering
    \begin{tikzpicture}
    \draw[>=latex,->] (1,0) -- (2,0) ;
    \draw[>=latex,->] (1,-1.5) -- (2,-1.5);
    \draw[>=latex,->] (0,-.5) -- (0,-1) ;
    \draw[>=latex,->] (3,-.5) -- (3,-1) ;
    \draw (0,0) node{$\mathbb{R}^m$};
    \draw (0,-1.5) node{$\mathbb{T}^m / \Delta $};
    \draw (3,0) node{$\teich$};
    \draw (3,-1.5) node{$\moduli$};
    \draw (1.5,0) node[above]{$\tilde{\tau}$};
    \draw (1.5,-1.5) node[above]{$\tau$};
    \draw (-.1,-.75) node[left]{$pr$};
    \draw (3.1,-.75) node[right]{$\pi$};
    \end{tikzpicture}
    \caption{the twist map}
    \label{commutativity}
\end{figure} 

In the previous construction, the subgroup $\Delta$ depends on the choice of the marking $f : S \to X_q$. Another choice would have produced a group conjugated to $\Delta$. 
\medskip

\subsection{Twist map, real rel flows and affine invariant orbifolds}

Let $\mathcal{M}$ be a nonabsolute affine invariant orbifold that contains the surface $q$. We denote by $k$ the rank of $\M$ and by $r$ the dimension of the foliation $\FM$. This last quantity is known as the Rel of $\M$. We also fix a local model $V$ of $\M$ around $q$, \textit{i.e.} there is an orbifold chart $(\mathcal{V},\varphi,\Gamma)$ around $q$, a point $\mathfrak{q} \in \mathfrak{M}$ such that $\iota(\mathfrak{q})=q$ and a neighborhood $\mathcal{U} \subset \mathfrak{M}$ around $\mathfrak{q}$ such that $\iota(\mathcal{U}) = \varphi(\mathcal{V}\cap V \otimes \mathbb{C})$. We can further assume that for any $v \in \R^m$, we have $\pi \circ \tilde{\tau}(v) \in \M$ if, and only if, $\Phi \circ \tilde{\tau}(v) \in V\otimes \mathbb{C}$. From now on, we assume that the cylinder decomposition of $q$ is \textbf{$\mathcal{F}^\mathcal{M}$-stable}: this means that any horizontal saddle connection of $q$ vanishes as an element of $T^{\ast}_X\mathcal{F}_q^{\mathcal{M}}$. Equivalently, this means that the period of these saddle connections remain unchanged along any deformation tangent to $\FM$. We emphasize that this condition is dependent on the locus $\M$ under consideration. To highlight this fact, consider a Veech surface $q_0$ and let $q$ be a flat cover of $q_0$ \textit{i.e.} there is a holomorphic map $\pi : q \to q_0$ such that $\pi^{\ast} \omega_0 = \omega$. We further assume that $\pi$ has three ramification points, exactly two of which are periodic points (see \cite{Gutkin2003affine} for relevant definitions). Up to applying a rotation, it can be assumed that these two periodic points are horizontally aligned. Finally, let $\mathcal{H}$ be the stratum in which $q$ lives and let  $\M \subset \mathcal{H}$ be its $\GL$-orbit closure. It follows from the assumptions that $\M$ is a rank one affine invariant orbifold, that $q$ is horizontally periodic and that decomposition is $\FM$-stable whenever the third ramification point is not aligned with the two periodic ones but the decomposition is never $\mathcal{F}^{\mathcal{H}}$-stable. Proposition \ref{intertwined} below shows that under the $\FM$-assumption, applying $Rel_v^t$ to $q$ with $v \in \frakK$ simply amounts to twisting its cylinders. On the contrary, the following figure shows an example of a real rel deformation that is not the result of twisting cylinders:

\begin{center}
    \begin{tikzpicture}[scale = 0.7]
    \begin{scope}[shift={(0,0)}]
    \draw (0,0) rectangle (2,1); 
    \draw (.7,1) rectangle (2.7,2);
    \draw (0,0) node{$\times$};
    \draw (.7,0) node{$\bullet$};
    \draw (2,0) node{$\times$};
    \draw (0,1) node{$\times$};
    \draw (.7,1) node{$\bullet$};
    \draw (2,1) node{$\times$};
    \draw (2,2) node{$\times$};
    \draw (.7,2) node{$\bullet$};
    \draw (2.7,1) node{$\bullet$};
    \draw (2.7,2) node{$\bullet$};
     \draw[>=latex,->](3.5,1) .. controls +(45:.5) and +(135:.5) .. (4.5,1) ;
    \end{scope}
    
    \begin{scope}[shift={(5.3,0)}]
    \draw (0,0) rectangle (2,1); 
    \draw (1.6,1) rectangle (3.6,2);
    \draw (0,0) node{$\times$};
    \draw (1.6,0) node{$\bullet$};
    \draw (2,0) node{$\times$};
    \draw (0,1) node{$\times$};
    \draw (1.6,1) node{$\bullet$};
    \draw (2,1) node{$\times$};
    \draw (2,2) node{$\times$};
    \draw (1.6,2) node{$\bullet$};
    \draw (3.6,1) node{$\bullet$};
    \draw (3.6,2) node{$\bullet$};
    
    \end{scope}
 
    \end{tikzpicture}
\end{center}

Let $v \in \frakK$ and let $\sigma_i$ be a cross curve of the $\mathcal{C}_i$ \textit{i.e.} a path that connects a zero of $\omega_q$ on the bottom component of $\mathcal{C}_i$ to another zero on the top component while intersecting the core curve only once. Notice that as $v \in \frakK$ and the decomposition is $\FM$-stable, the value $v(\sigma_i)$ does not depend on the choice of the cross curve. Finally, let $w_v = (v(\sigma_1)c_1^{-1},\cdots,v(\sigma_m)c_m^{-1}) \in \R^m$. We have the following: 

\begin{prop}
The straight-line flow $\phi_{v}^t: x \mapsto x + t\cdot w_v$ on $\R^m$ induces a well defined flow on $\mathbb{T}^m / \Delta$ and for any $t>0$, we have: 

\begin{equation*}\label{intertwined}
    Rel_v^t \circ \tau = \tau \circ \phi_{v}^t.
\end{equation*}
\end{prop}

\begin{proof}
It is know that the only obstruction for Rel trajectories to be defined for arbitrarily long time is the existence of saddle connections becoming shorter and shorter, see Proposition 4.5 in \cite{bainbridge2016horocycle}. This phenomenon is ruled out for $Rel_v^t$ trajectories starting at points in the image of $\tau$ by the $\FM$-stability condition. Now, by definition $\Phi \circ \tilde{\tau}(w_v) = \xi + \sum_{i=1}^m v(\sigma_i)C_i^{\ast}$ and we claim that $v = \sum_{i=1}^m v(\sigma_i)C_i^{\ast}$. To see this, one can use a basis of $H_1(S,\Sigma,\mathbb{C})$ obtained by adjoining to the $\sigma_i$ a family of saddle connections living on the boundaries of the cylinders. Since the cylinder decomposition of $q$ is $\FM$-stable, we know that $v$ vanishes on these saddle connections. Our claim then follows from the fact that the $\sigma_i$ are dual to the $C_i^{\ast}$ with respect to the intersection form $I$. This computation shows that $\tilde{\tau}$ sends the constant vector field $w_v$ to the constant vector field $\Phi^{\ast}v$ and the associated flows are by definition the straight line flow $\phi_{v}^t$ on $\R^m$ and the Rel flow $Rel_v^t$ on $\teich$. It is thus enough to show that $\phi_v$ induces a well defined flow on $\mathbb{T}^m / \Delta$ or equivalently that $\phi_v^t$ commutes with the action of $\Delta$. Recall that $\Delta$ must preserve the labelling of the zeroes. Consequently, if an element of $\Delta$ maps $\mathcal{C}_i$ to $\mathcal{C}_j$, then $\sigma_i$ and $\sigma_j$ can be chosen to both start and end at the same singularity and $\sigma_j  - \sigma_j$ is thus a loop. Since $v \in \mathrm{ker}\rho$, we get that $v(\sigma_i) - v(\sigma_j) = 0$. It is also the case that $c_i=c_j$ and thus ${w_v}_i = {w_v}_j$. This shows that $\Delta$ and $\phi_v^t$ commute.
\end{proof} 

We denote by $\K$ the subset $\{w_v \in \R^m \ | \ v \in \frakK \}$. It is a linear subspace and by construction we have $\Phi \circ \tilde{\tau}(\K) = \xi + \frakK$. We also denote by $\mu = (h_1/c_1,\cdots, h_m/c_m)$ and by $\V = \K \oplus \langle  \mu \rangle$. Notice that $\tau \circ pr(t \cdot \mu)=h_t \cdot q$. Since the horocycle flow commutes with Real Rel flows, using Proposition \ref{intertwined}, we see that $\tau \circ pr(\V)$ is contained in $\M$.  

\begin{prop}\label{VK}
Suppose that the rank of $\M$ is 1. Then, the space $\V$ is a rational. 
\end{prop}

\begin{proof}
Suppose to a contradiction that $\V$ is not rational. Any rational subspace $F$ contained in $\V$ thus has empty interior and since there are only countably many such rational subspaces, Baire category theorem ensures that there is a vector $v \in \V$ that is not contained in any rational subspace of $\V$ and we let $F$ be the smallest rational subspace that contains $v$. By assumption, $F$ is not contained in $\V$. By the Kronecker theorem (see Proposition 7 on page 74 of \cite{bourbaki1966elements}) we know that $pr \{t \cdot v \ | t > 0 \}$ is dense in $pr(F)$. Since $\tau \circ pr \{t \cdot v \ | t > 0 \}$ is contained in $\M$ and that $\M$ is closed, then $\tau \circ pr(F) = \pi \circ \tilde{\tau}(F)$ is also contained in $\M$. This means that $\Phi \circ \tilde{\tau}(F)$ is contained in $V \otimes \mathbb{C}$ (cf. the assumption made at the beginning of Section 3.2). Then, since the core curves of the cylinders do not intersect each other, the image by $\rho$ of the linear subspace $(\Phi \circ \tilde{\tau}(\R^m) - \xi) \cap V \otimes \mathbb{C}$ is an isotropic subspace of $\rho(V \otimes \mathbb{C})$ and we claim its dimension is at least $2$. Indeed, let $f \in F - \V$ and suppose to a contradiction that there is a $t \in \mathbb{R}$ such that $\rho ( \Phi \circ \tilde{\tau}(f)- \xi) = t\cdot \rho ( \Phi \circ \tilde{\tau}(\mu)- \xi)$. Using the fact that the map $\Phi \circ \tilde{\tau}(\cdot) - \xi$ is linear, it follows that there is a $k \in \frakK$ such that $\Phi\circ \tilde{\tau}(f - t\cdot \mu) = \xi + k$ and the latter is equal to $\Phi \circ \tilde{\tau}(w_k)$. Since the map $\Phi \circ \tilde{\tau}$ is injective, this shows that $f = w_k + t\cdot \mu \in \V$, a contradiction. It follows that the images by $\rho$ of $\Phi \circ \tilde{\tau} (f) - \xi$ and $\Phi \circ \tilde{\tau} (\mu)-\xi$ are linearly independent vectors in $\rho(V \otimes \mathbb{C})$. This is in contradiction with the fact that $\rho(V \otimes \mathbb{C})$ should be a symplectic space of dimension 2 ($\M$ has rank 1).  
\end{proof} 

We now define the following property: 

\begin{dfn}[Property $\mathcal{P}$]
The surface $q$ is said to have property $\mathcal{P}$ if it has a horizontal $\FM$-stable cylinder decomposition and $\K$ is not a rational subspace of $\R^m$. By extension, we say that an affine invariant orbifold $\M$ has property $\mathcal{P}$ if it contains a surface that has property $\mathcal{P}$.
\end{dfn}

\section{Property $\mathcal{P}$ and density of the leaves}\label{p}

This section is dedicated to the proof of Theorem \ref{A}.

\begin{prop}\label{containshorocycle}
Let $\mathcal{M}$ be a nonabsolute rank one affine invariant orbifold and suppose that it contains a surface  $q$ that has property $\mathcal{P}$. Then the closure of $\FM$ contains the horocycle orbit of $q$. 
\end{prop}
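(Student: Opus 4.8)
The goal is to show that the horocycle orbit $H_\theta\cdot(X,\omega)$ (in a well-chosen direction $\theta$) lies in $\overline{\FM_X}$. The idea is to exploit property $\mathcal{P}$: we have a horizontal $\FM$-stable cylinder decomposition of $(X,\omega)$ and a minimal $\M$-isoperiodic deformation $u\in\KN(X)$ of degree $\geq 1$. Degree $\geq 1$ means the ratios $u_i/u_j$ among the nonzero coordinates of $u$ are not all rational, so the orbit of $u$ under the integer lattice $\mathbb{Z}^m$ — equivalently the closure of $q(\mathbb{R}u)$ in $\mathbb{T}^m/\Delta$ — has dimension at least $2$ in the directions of the support of $u$. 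Concretely: the twist map $\tau$ is injective on $\KN(X)$, and $\tau\circ q(\mathbb{R}u)$ is contained in $\FM_X$; so its closure is contained in $\overline{\FM_X}$. The first step is therefore to identify what $\overline{q(\mathbb{R}u)}$ is inside $\mathbb{T}^m/\Delta$, using the standard fact that the closure of a line in a torus is a rational subtorus whose dimension equals the degree plus one.

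**Producing the horocycle.** The second and key step is to produce, inside this closure, the horocycle direction. Here is the mechanism I expect the paper to use. Because $u$ has degree $\geq 1$, the closure $\overline{q(\mathbb{R}u)}$ contains a subtorus, and in particular contains a coordinate direction — more precisely, it contains $q(\mathbb{R}v)$ for some rational vector $v$ supported inside the support of $u$ (after rescaling the cylinders, the closure of a dense line on a $2$-torus contains every direction, in particular rational ones). But twisting a cylinder by an integer multiple of its circumference — i.e. applying a full Dehn twist — brings us back to the same surface in $\moduli$; so the corresponding deformation $\tau$ of the *absolute* periods along a rational direction is realized by an element of $\SL$ acting in the horizontal direction, namely a horocycle $h_t^{0}$ (a horizontal shear preserves the horizontal cylinder decomposition and shears each cylinder by an amount proportional to its height). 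The point is that a horizontal horocycle applied to $(X,\omega)$ acts on the cylinder $\mathcal{C}_i$ exactly like twisting by $t h_i$ where $h_i$ is the height; by choosing the direction appropriately inside the closure of $q(\mathbb{R}u)$, we match the horocycle deformation, coordinate by coordinate, with a limit of the isoperiodic deformations $\tau(s_n u)$. Thus $h_t^0\cdot(X,\omega)\in\overline{\tau\circ q(\mathbb{R}u)}\subset\overline{\FM_X}$ for all $t$.

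**The main obstacle.** The delicate point — and the place where the $\FM$-stability and rank-one hypotheses really enter — is matching the horocycle deformation with an isoperiodic one. An arbitrary horizontal horocycle changes the *relative* periods by shearing cylinders according to their heights $h_i$, whereas the deformation $\tau(su)$ changes them according to the vector $su$; these agree on absolute periods only after a Dehn twist renormalization, and agree on relative periods only in the directions where $\KN(X)$ genuinely controls the motion. $\FM$-stability of the decomposition is exactly what guarantees (via Lemma \ref{VK}, where the linear part of $\Phi\circ\tilde\tau$ was shown to surject onto $\ker\rho\cap V$) that the isoperiodic deformations $\tau(v)$, $v\in\KN(X)$, capture all the relative-period motion within a cylinder. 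So I would: (i) fix the horizontal decomposition and the minimal deformation $u$ from property $\mathcal{P}$; (ii) use the rank-one hypothesis and Wright's field-of-definition computation (Proposition \ref{field}) to see that the circumferences of the cylinders in the support of $u$ are commensurable, so that the relevant horocycle shear is realized, up to the $\mathbb{Z}^m$-action, as a limit point of $\{q(s u) : s\in\mathbb{R}\}$; (iii) conclude $\overline{\tau\circ q(\mathbb{R}u)}$ contains a full horocycle orbit segment through $(X,\omega)$, and hence, by the group property of the horocycle flow, the whole orbit $H_\theta\cdot(X,\omega)$ lies in $\overline{\FM_X}$. The main obstacle is step (ii): making precise the commensurability bookkeeping that lets a limit of purely relative-period deformations reproduce a shear that a priori moves absolute periods as well — this is where one must carefully use that the twist of a cylinder by its full circumference is trivial in $\moduli$, together with the degree $\geq 1$ hypothesis to have enough room in the closure.
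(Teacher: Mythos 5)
There is a genuine gap at the heart of your second step. You try to land the horocycle inside $\overline{\tau\circ q(\mathbb{R}u)}$, i.e.\ inside the rational subtorus $q(V_u)$ where $V_u$ is the rational hull of $\mathbb{R}u$. But the horizontal horocycle acts on $(X,\omega)$ as the twist with vector $t\,(h_1c_1^{-1},\dots,h_mc_m^{-1})$, which has \emph{full} support, whereas every vector of $V_u$ is supported in the support of $u$ (vanishing of a coordinate is a rational linear condition). In the relevant examples (see the Prym decompositions in Section 5) the minimal deformation $u$ has zero coordinates, so the horocycle direction simply cannot lie in $V_u$, and the claim $h_t\cdot(X,\omega)\in\overline{\tau\circ q(\mathbb{R}u)}$ fails. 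Your step (ii) is also backwards: rank one together with Proposition \ref{field} does not make the circumferences in the support of $u$ commensurable; on the contrary, degree $\geq 1$ of deformations like $(\pm c_i^{-1})$ is exactly incompatible with commensurability of the $c_i$ (that would force arithmeticity). Finally, you never use minimality of $u$, which is essential.

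The missing idea is a dimension count in the full twist space $\V(X)$, not just along the line $\mathbb{R}u$. By Lemma \ref{VK}, $\V(X)$ is rational, hence contains $V_u$, and $\dim\V(X)\leq r+1$ in rank one, while $\dim\K(X)=r$. Since $\deg(u)\geq 1$, $V_u$ contains a vector whose support is strictly smaller than that of $u$; by \emph{minimality} this vector is not in $\K(X)$, so $\K(X)+V_u\supsetneq\K(X)$ and therefore $\V(X)=\K(X)+V_u$. Now approximate: any $w=k+v$ with $k\in\K(X)$, $v\in V_u$ is the limit of $k+t_nu\in\K(X)$ for a suitable sequence $t_n$ (since $q(V_u)=\overline{\{q(tu)\}}$), so $\tau\circ q(\V(X))\subseteq\overline{\FM_X}$; it is the translation by the fixed offset $k\in\K(X)$ that lets a limit of purely isoperiodic deformations reach points, such as the horocycle twist vector, that move absolute periods. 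That vector lies in $\V(X)$ because the horocycle orbit stays in $\M$, which concludes. No commensurability bookkeeping and no Dehn-twist renormalization of absolute periods is needed.
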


\begin{proof}
It is an immediate consequence of Proposition \ref{VK} that if $\K$ is not rational then the smallest rational subspace that contains $\K$ is $\V$. By the Kronecker theorem, this implies that $pr(\K)$ is dense in $pr(\V)$. We have by construction $\tau \circ pr(\K) \subset \FM_q$ and thus $\tau \circ pr(\V)$ is contained in the closure of $\FM_q$. But since $\mu \in \V$ and $\tau \circ pr(t\cdot \mu) = h_t \cdot q$, we can conclude. 
\end{proof}

\begin{prop}\label{whole}
Let $\mathcal{M}$ be a nonabsolute rank 1 affine invariant orbifold and $q \in \mathcal{M}$. If the horocycle orbit of $q$ is contained in the closure of its isoperiodic leaf, then so is its $\SL$-orbit.
\end{prop}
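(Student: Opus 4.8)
\ Write $h_t=h^0_t$ and $g_s=g^0_s$ for the horizontal horocycle and geodesic flows, and put $\mathcal{L}=\overline{\FM_X}$, the closure of the isoperiodic leaf of $(X,\omega)$: this is the closed set appearing in Proposition~\ref{containshorocycle}, and the one relevant here, since absolute periods are constant along the leaves of $\FM$ whereas they vary along $\SL$--orbits. We show $\mathcal{L}\supseteq\SL\cdot(X,\omega)$. Three features of $\mathcal{L}$ are used: it is closed; it is \emph{leaf--saturated} — the closure of a leaf of $\FM$ is a union of leaves, by plaque continuation along the local model \eqref{localmodel}, so that $Y\in\mathcal{L}$ forces $\overline{\FM_Y}\subseteq\mathcal{L}$; and $\GL\cdot\mathcal{L}=\M$, since $\GL\cdot\FM_X=\M$ and $\M$ is closed. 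After a rotation we may assume the hypothesis reads $h_t\cdot(X,\omega)\in\mathcal{L}$ for every $t\in\mathbb{R}$.

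\smallskip

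The first step upgrades this to: \emph{$\mathcal{L}$ is $h_s$--invariant for every $s\in\mathbb{R}$}. For each $s$, since $h_s$ is a homeomorphism of $\moduli$ and by \eqref{commutation}, $h_s\cdot\mathcal{L}=\overline{h_s\cdot\FM_X}=\overline{\FM_{h_s\cdot X}}$; as $h_s\cdot(X,\omega)\in\mathcal{L}$ and $\mathcal{L}$ is closed and leaf--saturated, this is contained in $\mathcal{L}$, so $h_s\cdot\mathcal{L}\subseteq\mathcal{L}$ for all $s$, hence $h_s\cdot\mathcal{L}=\mathcal{L}$. The second step is a reduction: it suffices to prove that $\mathcal{L}$ is moreover $g_s$--invariant for all $s$. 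Granting this, $\mathcal{L}$ is invariant under the full group $P$ of upper--triangular matrices, and by the orbit closure theorem of Eskin--Mirzakhani--Mohammadi \cite{eskin2015isolation} the set $\overline{P\cdot(X,\omega)}$ is an affine invariant orbifold, hence $\GL$--invariant, hence equal to $\overline{\SL\cdot(X,\omega)}$; since $\mathcal{L}$ is closed and $P$--invariant, $\overline{\SL\cdot(X,\omega)}\subseteq\mathcal{L}$, which is the claim.

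\smallskip

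It remains to establish the $g_s$--invariance of $\mathcal{L}$, which is where rank one enters and where I expect the real difficulty to lie. Since $\M$ has rank $1$, restriction to absolute cohomology confines the absolute periods of surfaces in $\M$ to (a monodromy orbit of) a single $2$--dimensional symplectic subspace \cite{avila2017symplectic}, so that the ``absolute part'' of the $\SL$--dynamics on $\mathcal{L}$ is governed by the $\SL$--action on a rank--one period datum — a horocycle--flow situation of the kind treated in \cite{bainbridge2016horocycle} and, in the arithmetic quotient, by Dani's classification of closed horocycle--invariant sets. Feeding in that $\mathcal{L}$ is closed, $h_s$--invariant, leaf--saturated, and satisfies $\GL\cdot\mathcal{L}=\M$, one obtains a dichotomy for $(X,\omega)$: either its horocycle orbit is periodic, or $\overline{\{h_t\cdot(X,\omega):t\in\mathbb{R}\}}$ already exhausts $\overline{\SL\cdot(X,\omega)}$ in the absolute directions, in which case — propagating along the leaf and using $\GL\cdot\mathcal{L}=\M$ — one extracts the $g_s$--invariance of $\mathcal{L}$. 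The periodic case, where $(X,\omega)$ carries a horizontal parabolic, needs a separate treatment; Lemma~\ref{VK} is the relevant input, since it forces every $\M$--isoperiodic deformation near such a surface to be a cylinder twist, so the cylinder moduli — hence whether the horocycle orbit is periodic — are locally constant along the leaf. The main obstacle is precisely this last step: making the dichotomy rigorous for an arbitrary non--absolute rank--one $\M$, identifying the correct $\GL$--equivariant ``absolute quotient'' along which $\mathcal{L}$ is a full preimage, and disposing cleanly of the parabolic/cusp case.
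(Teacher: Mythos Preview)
Your first step is correct and matches the paper: once the horizontal horocycle through $(X,\omega)$ lies in $\mathcal{L}=\overline{\FM_X}$, leaf-saturation together with \eqref{commutation} makes $\mathcal{L}$ fully $h_s$-invariant. Your reduction ``if $\mathcal{L}$ is also $g_s$-invariant then $P$-invariance plus \cite{eskin2015isolation} finishes'' is also valid.

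The gap is exactly where you place it, and it is a genuine one. You do not prove $g_s$-invariance, and the sketch you offer does not lead to a proof. There is no global $\SL$-equivariant ``absolute quotient'' map from $\M$ to a finite-volume homogeneous space on which Dani's classification applies: the absolute period map lives on the Teichm\"uller cover and the rank-one symplectic subspace $\rho(V)$ carries monodromy. Even granting some quotient, your dichotomy ``horocycle periodic or horocycle closure equals $\SL$-orbit closure'' is false in general for horocycle orbits in strata, and controlling horocycle-invariant closed subsets of a rank-one $\M$ is essentially the content of \cite{bainbridge2016horocycle} --- a deep result you cannot invoke as a black box here. The aside about Lemma~\ref{VK} is also not quite right for your purpose: it tells you that near a surface with an $\FM$-stable decomposition the isoperiodic deformations are cylinder twists, so moduli are locally constant along the leaf there, but this says nothing global and does not by itself handle the parabolic case you isolate.

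The paper bypasses all of this with an elementary device you are missing. Using $\GL\cdot\FM_X=\M$ and the closing lemma, one finds \emph{inside the leaf} $\FM_X$ a surface $(X_0,\omega_0)$ fixed by some hyperbolic $g\in\SL$. Since $g\cdot(X_0,\omega_0)=(X_0,\omega_0)\in\FM_X$, equation \eqref{commutation} gives $g\cdot\FM_X=\FM_X$, hence $g\cdot\mathcal{L}=\mathcal{L}$. Now the horizontal horocycle $H_0\cdot(X_0,\omega_0)$ lies in $\mathcal{L}$ by your first step; applying $g^n$ and using $g\cdot H_{\theta}=H_{g\cdot\theta}\cdot g$ together with the fixed-point property yields $H_{\theta_n}\cdot(X_0,\omega_0)\subset\mathcal{L}$, where $\theta_n$ is the image of the horizontal direction under $g^n$. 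These directions converge to the expanding eigendirection $\theta$ of $g$, so by closedness $H_\theta\cdot(X_0,\omega_0)\subset\mathcal{L}$. One then cites Proposition~5.2 of \cite{hooper2015rel}, which says that the horocycle orbit of a hyperbolic fixed point in its expanding direction already has the full $\SL$-orbit in its closure. A final use of \eqref{commutation} transfers the conclusion from $(X_0,\omega_0)$ back to $(X,\omega)$. No classification of horocycle-invariant sets and no appeal to \cite{eskin2015isolation} are needed.
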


\begin{proof}
We will need the following lemma:

\begin{lem}
The $\M$-isoperiodic leaf of $q$ contains a surface fixed by a non trivial hyperbolic matrix.
\end{lem}

\begin{proof}[proof of the lemma]
It is proved in, for instance, \cite{wright2014field} that any affine invariant orbifold contains a surface fixed by a non trivial hyperbolic matrix, using a closing lemma for the geodesic flow. Let $q_1$ be such a surface associated to a hyperbolic matrix $g$. Now equation \eqref{product2} applied to $q_1$ shows that there is a matrix $b$ that sends $q_1$ to the $\M$-isoperiodic leaf of $q$. The surface $b \cdot q_1$ is fixed by the hyperbolic matrix $bgb^{-1}$. 
\end{proof}

Now, pick a surface $q_0 \in \FM_q$ as in the previous lemma with $g \cdot q_0 = q_0$ for some hyperbolic matrix $g$. Denote by $E=\{h \in SL_2(\R) \ | \ h \cdot q_0 \in \overline{\FM_{q_0}} \}$. Using equation \eqref{commutation} and the fact that the closure of $\FM_q = \FM_{q_0}$ is saturated by $\FM$ we show that $U \subset E$. Since $g \cdot q_0 = q_0$ it follows also that $gUg^{-1} \subset E$. If $U$ and $gUg^{-1}$ are equal, it means that $g$ preserves the horizontal direction of $q_0$. Up to replacing $q_0$ and $q$ by $h_t \cdot q_0$ and $h_t \cdot q$ for a well chosen $t$, we can assume that $g \in A$. In that case, we can use Proposition 5.2 in \cite{hooper2015rel} to conclude that that $E=SL_2(\R)$. Otherwise $gUg^{-1}$ is distinct from $U$ and since $SL_2(\R)$ is generated as a monoid by any two of its unipotent subgroups, we have the smallest set that contains $U$ and $gUg^{-1}$ and that is invariant by multiplication is $\SL$ itself. To conclude that $E=\SL$, notice that $E$ is invariant by multiplication, which follows again from equation \eqref{commutation} and the fact that the closure of $\FM_{q_0}$ is saturated by the $\M$-isoperiodic foliation. Finally, for any $h \in \SL$ we have that $h\cdot q \in \FM_q$ if and only if $h\cdot q_0 \in \FM_{q_0}$ which concludes the proof. 
\end{proof}

We can now prove: 
\begin{thm}[Theorem \ref{A}]\label{crit1}
Let $\mathcal{M}$ be a nonabsolute rank one affine invariant orbifold having property $\mathcal{P}$. Then the $\M$-isoperiodic foliation is projectively minimal.
\end{thm}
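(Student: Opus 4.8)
The plan is to combine Propositions \ref{containshorocycle} and \ref{whole} to get, for a well-chosen surface $(X,\omega)$ with property $\mathcal{P}$, that the closure of $\FM_X$ contains a full $\SL$-orbit inside $\M$, and then to upgrade this to projective density using the rank $1$ hypothesis together with Proposition \ref{product}. First I would fix a surface $(X_1,\omega_1) \in \M$ having property $\mathcal{P}$; by Proposition \ref{containshorocycle} the closure of $\FM_{X_1}$ contains the horocycle orbit $H_\theta \cdot (X_1,\omega_1)$ in the relevant direction $\theta$, and then Proposition \ref{whole} gives that this closure contains the whole $\SL$-orbit of $(X_1,\omega_1)$. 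Since the closure of $\FM_{X_1}$ is saturated by the $\M$-isoperiodic foliation and invariant under $\SL$ (this last point follows from \eqref{commutation}: $g \cdot \overline{\FM_{X_1}} = \overline{\FM_{g \cdot X_1}}$, and $g\cdot X_1$ lies in $\overline{\FM_{X_1}}$ for $g \in \SL$), the set $\mathcal{Z} := \overline{\FM_{X_1}}$ is a closed, $\SL$-invariant, $\FM$-saturated subset of $\M$.

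The next step is to show $\mathcal{Z}$ contains an open subset of $\M$ and hence, by minimality-type arguments, all of the area level set. Since $\mathcal{Z} \supseteq \SL \cdot \FM_{X_1}$ and $\FM_{X_1}$ has dimension $r = \mathrm{Rel}(\M)$, while $\SL$ contributes the remaining directions transverse to the area-normalized $\GL$-orbit, a dimension count gives $\dim \mathcal{Z} = \dim \M - 1$ (the missing direction being the area). More carefully: by Proposition \ref{product}, $\GL \cdot \FM_{X_1} = \M$; the scaling direction in $\GL$ (the diagonal $\lambda \cdot \mathrm{Id}$) moves area, so $\SL \cdot \FM_{X_1}$ already sweeps out the full level set of the area function through $(X_1,\omega_1)$ up to closure, provided $\SL \cdot \FM_{X_1}$ has nonempty interior in that level set. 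Nonemptiness of the interior follows because $\SL$ has dimension $3$, the leaf has dimension $r$, and the rank $1$ condition $\dim_{\mathbb{C}} \M = 2 + r$ (i.e. $\dim_{\mathbb{R}} \M = 4 + 2r$) together with the submersion property of $\rho \circ \Phi$ and the transversality of the $\SL$-action to the leaves forces the product to be of full dimension $4 + 2r$ — one more than $\dim(\text{area level set}) = 3 + 2r$, so after intersecting with the level set we get an open subset. Thus $\mathcal{Z}$ contains a nonempty open subset $U$ of the area level set $\M_a$, where $a = \mathrm{Area}(X_1,\omega_1)$.

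Finally, I would propagate this openness to all of $\M_a$: since $\mathcal{Z}$ is $\SL$-invariant and the $\SL$-action on $\M_a$ is topologically transitive (affine invariant orbifolds carry a $\GL$-action that is topologically transitive after normalizing area, e.g. by Masur--Veech/Eskin--Mirzakhani--Mohammadi or directly from density of a generic orbit), the $\SL$-saturation of $U$ is dense in $\M_a$; being also closed, $\mathcal{Z} \supseteq \M_a$. Hence $\overline{\FM_{X_1}} = \M_a$, i.e. the leaf of $(X_1,\omega_1)$ is projectively dense. To conclude for \emph{every} surface in $\M$, invoke Proposition \ref{product}: given arbitrary $(Y,\eta) \in \M$ there is $g \in \GL$ with $g \cdot \FM_{X_1} = \FM_Y$, and $g$ may be taken to preserve area up to scaling; then \eqref{commutation} gives $\overline{\FM_Y} = g \cdot \overline{\FM_{X_1}} = g \cdot \M_a = \M_{a'}$ where $a' = \mathrm{Area}(Y,\eta)$, so the leaf of $(Y,\eta)$ is projectively dense as well. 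This proves $\FM$ is projectively minimal.

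\medskip

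The main obstacle I anticipate is the dimension count in the middle paragraph: one must be careful that $\SL \cdot \FM_{X_1}$ (not merely $\GL \cdot \FM_{X_1}$) has nonempty interior in the area level set. This requires knowing that the $\SL$-directions are genuinely transverse to the tangent space of $\FM_{X_1}$ at some point — equivalently that the leaf does not lie inside a single $\SL$-orbit — which is exactly where non-absoluteness ($r \geq 1$) and the existence (from Proposition \ref{whole}'s lemma) of an $\SL$-orbit of positive codimension inside the leaf closure get used. Controlling this transversality uniformly, rather than just at the special periodic surface, is the delicate point; everywhere else the argument is a formal manipulation of the closure being $\SL$-invariant and $\FM$-saturated.
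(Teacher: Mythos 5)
Your overall architecture coincides with the paper's: Propositions \ref{containshorocycle} and \ref{whole} place the $\SL$-orbit of a surface with property $\mathcal{P}$ inside the closure of its leaf, formula \eqref{commutation} together with saturation of the closure by $\FM$ makes that closure $\SL$-invariant, and your final transport of density to every other leaf by an element of $\GL$ is exactly the paper's last step. Where you genuinely diverge is the middle: you obtain projective density of $\FM_{X_1}$ through a dimension count giving a nonempty interior in the area level set, followed by topological transitivity of $\SL$ on that level set, and you yourself flag that the transversality of the $\SL$-directions to the leaves is not fully controlled. This detour is avoidable, and bypassing it is what the paper does: Proposition \ref{product} gives $\GL\cdot\FM_{X_1}=\M$; since the area is constant along leaves and is multiplied by $\det g$ under $g\in\GL$, any surface of $\M$ with the same area $a$ as $(X_1,\omega_1)$ is of the form $g\cdot z$ with $z\in\FM_{X_1}$ and $\det g=1$, so the whole area-$a$ level set equals $\SL\cdot\FM_{X_1}$, which lies in the closed $\SL$-invariant set $\overline{\FM_{X_1}}$; the reverse inclusion holds because the level set is closed. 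This removes both of your extra inputs (interior and transitivity) at no cost.

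If you prefer to keep your route, it can be completed: the transversality you worry about is in fact automatic at every surface of positive area, since a tangent vector in the $\mathfrak{sl}_2(\mathbb{R})$-direction has the form $A\cdot\xi$ with $A$ a real $2\times 2$ matrix acting on the $\mathbb{R}^2$-valued class $\xi$, and if it lay in $\ker\rho$ then $A$ would annihilate the absolute part $\rho(\xi)$, which is impossible because $\mathrm{Re}\,\rho(\xi)$ and $\mathrm{Im}\,\rho(\xi)$ are linearly independent when the area is positive; moreover the topological transitivity of $\SL$ on the area level set that you invoke follows from (indeed is equivalent to) the topological transitivity of $\GL$ on $\M$ already used in the proof of Proposition \ref{product}. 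So your argument is correct once that flagged point is filled in, but the direct determinant/area bookkeeping is both shorter and the paper's intended reasoning.
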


\begin{proof}
Let $q$ be a surface in $\M$ that has property $\mathcal{P}$. According to Propositions \ref{containshorocycle} and \ref{whole}, $\SL\cdot q$ is contained in the closure of $\FM_q$. Using equation \eqref{commutation} and the fact the closure of $\FM_q$ saturated by $\FM$, we deduce that the closure $\FM_q$ is invariant under the action of $\SL$. Using equation \eqref{product2}, we conclude that the $\M$-isoperiodic leaf of $q$ is projectively dense. Finally, another utilisation of equation \eqref{product2} together with \eqref{commutation} proves that there is an element of $\GL$ that sends the $\M$-isoperiodic leaf of $q$ to the $\M$-isoperiodic leaf of any other surface in $\M$. Since this action is continuous, all the leaves are projectively dense. 
\end{proof}

In Section 5, we will give examples of rank 1 affine invariant orbifolds that have property $\mathcal{P}$. However, not all of them have it, as shows the following example. 

\begin{prop}\label{counterexample} Let $q_0$ be a Veech surface, that is a surface whose $\GL$-orbit is closed, and let $p: q \to q_0$ be a flat cover: a holomorphic map such that $p^{\ast}\omega_{q_0} = \omega_q$. Finally, let $\M$ be the $\GL$-orbit closure of $q$. Then, $\M$ is a rank 1 affine invariant orbifold such that $\FM_q$ is closed. In particular, if $\M$ is an arithmetic rank 1 affine invariant orbifold \textit{i.e.} $k(\M)=\mathbb{Q}$, then the leaves of $\FM$ are closed. 
\end{prop}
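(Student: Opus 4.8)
The plan is to first establish that $\M$ is a rank $1$ affine invariant orbifold. By the Eskin--Mirzakhani--Mohammadi theorem, the $\GL$-orbit closure $\M$ of $(X,\omega)$ is an affine invariant orbifold, so the only point is the rank computation. Since $p^{\ast}\omega_0 = \omega$, the pullback $p^{\ast} : H^1(X_0, \mathbb{C}) \to H^1(X, \mathbb{C})$ is an isometric embedding for the intersection form, and its image contains the class of $\omega$. The tangent space to $\M$ at $(X,\omega)$ restricted to absolute cohomology, $\rho(V \otimes \mathbb{C})$, is $\GL$-invariant and contains $[\omega]$ and $[\overline{\omega}]$; since $(X_0,\omega_0)$ is Veech, $H^1(X_0,\mathbb{C})$ has no $\GL$-invariant symplectic subspace other than the span of $[\omega_0],[\overline{\omega_0}]$ after passing to the relevant factor, so $\rho(V \otimes \mathbb{C})$ has dimension exactly $2$, i.e. $rk(\M) = 1$. (If one prefers to avoid this argument, one can instead invoke that the orbit closure of a flat cover of a Veech surface has rank $1$ because its absolute periods are governed by those of $(X_0,\omega_0)$, which has a closed orbit; this is where I expect the \emph{only} subtlety, and it may be cleanest to cite Wright's cylinder deformation paper or an explicit statement about covers.)

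Next I would identify the leaf $\FM_X$. Every element of $\mathcal{F}_X$ is obtained by a Rel (isoperiodic) deformation, i.e. by moving zeros while fixing all absolute periods. For a surface lying over $(X_0,\omega_0)$ via a flat cover branched over the singularities of $\omega_0$ and possibly a marked non-periodic point, the data of the cover together with the fixed position in $(X_0,\omega_0)$ of all branch points determines the surface; an $\M$-isoperiodic deformation keeps all absolute periods fixed, and since the absolute periods of a cover determine (up to the finite combinatorial data of the cover, which is locally constant) the base surface $(X_0,\omega_0)$ together with the images of the branch points that are forced, the only freedom is moving the non-periodic marked point on the fixed surface $X_0$. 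Concretely, as in Example 3 of the excerpt, $\FM_X$ is identified with $X_0$ punctured at the singularities of $\omega_0$. The key point for us is only that this identification is by a \emph{proper} map: the leaf, as a subset of $\M \subset \moduli$, is the image of a fixed compact-type surface (a fixed $X_0$, not varying) punctured at finitely many points. A sequence in $\FM_X$ converging in $\moduli$ would converge to a flat cover of the \emph{same} $(X_0,\omega_0)$ (the base is locally constant along the leaf and $(X_0,\omega_0)$ has closed orbit, so it cannot degenerate or move), hence the limit lies in $\FM_X$. Therefore $\FM_X$ is closed in $\M$.

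For the final sentence: if $\N$ is an arithmetic rank $1$ affine invariant orbifold, then by definition $k(\N) = \mathbb{Q}$, and by a structural result (again EMM together with the classification of arithmetic rank $1$ loci, or directly: an arithmetic rank $1$ locus is an orbit closure of a square-tiled surface, which is a flat cover of the standard torus $\mathbb{C}/(\mathbb{Z}+i\mathbb{Z})$) every surface in $\N$ is a flat cover of a fixed Veech surface — indeed of a square-tiled surface. Applying the first part of the proposition to each such surface, all leaves of $\FN$ are closed. I would phrase this as: pick any $(X,\omega) \in \N$; it is a torus cover, so the first part applies with $(X_0,\omega_0)$ the torus; by Proposition \ref{product} (equation \eqref{product}) all leaves of $\FN$ are $\GL$-translates of one another up to closure, so all are closed.

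The main obstacle is the rank-$1$ claim and the precise sense in which "the base surface is locally constant along the isoperiodic leaf": one must argue carefully that an isoperiodic deformation of a flat cover cannot change the conformal/flat structure of the base $(X_0,\omega_0)$, which uses that $p^{\ast}H^1(X_0)$ is a $\GL$-invariant, hence (by rank $1$ and Veech-ness) essentially all of the "absolute" part of the tangent space, so fixing absolute periods fixes the base. Everything else is a matter of unwinding the definitions of flat cover, the period map, and properness of immersions.
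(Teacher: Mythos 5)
Your treatment of the first assertion is essentially the paper's argument: the paper defines a continuous $\GL$-equivariant map $\Psi$ from $\M$ to the closed orbit $\M_0$ of $(X_0,\omega_0)$, sending each surface to the unique surface it covers, and observes that $\FM_X$ is a connected component of the closed fibre $\Psi^{-1}(X_0,\omega_0)$; your ``the base is locally constant along the leaf and limits of covers of the fixed $(X_0,\omega_0)$ are again covers of $(X_0,\omega_0)$'' is the same idea, except that your last step (``hence the limit lies in $\FM_X$'') should be phrased as in the paper: the limit lies in the closed set of covers of $(X_0,\omega_0)$, and $\FM_X$ is closed because it is a connected component of that closed set. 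The rank~$1$ claim is indeed best delegated to a citation, as you suggest; your invariant-subspace argument as written is not a proof, since nothing in it explains why $\rho(V\otimes\mathbb{C})$ is controlled by $H^1(X_0,\mathbb{C})$ without first knowing that every surface of $\M$ covers a surface of $\M_0$.

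The genuine gap is in the second assertion. First, your justification that every surface of $\N$ covers a fixed Veech surface --- ``an arithmetic rank 1 locus is an orbit closure of a square-tiled surface'' --- is false: square-tiled surfaces are lattice surfaces, so their orbit closures are closed orbits of complex dimension $2$, whereas a non absolute arithmetic rank $1$ orbifold has strictly larger dimension, and in any case the base torus varies over $\N$ rather than being fixed. (The statement you actually need, that every surface in $\N$ is a branched torus cover, is true --- for a rank $1$ arithmetic locus the absolute periods of each surface generate a lattice in $\mathbb{C}$ --- but it requires an argument or a precise reference.) Second, and more seriously, applying the first assertion to an arbitrary $(X,\omega)\in\N$ only gives closedness of $\FM_X$ for $\M=\overline{\GL\cdot(X,\omega)}$, and nothing guarantees $\M=\N$: if for instance $(X,\omega)$ is itself square-tiled, then $\M$ is its closed orbit, $\FM_X$ is a single point, and this says nothing about the a priori larger leaf $\FN_X$; closedness of a leaf of the smaller suborbifold does not transfer to the leaf of $\FN$. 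To make your scheme work you must exhibit one surface of $\N$ that is simultaneously a torus cover and has $\GL$-orbit closure equal to all of $\N$ --- this is exactly what the paper's proof does by starting from a surface with rational periods and flowing along $\FN$ to a torus cover with full orbit closure (alternatively, take a surface with dense $\GL$-orbit and verify it is a torus cover). Only after that does the homogeneity coming from proposition \ref{product} transport the closedness of that one $\FN$-leaf to all leaves of $\FN$.
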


\begin{proof} We start by proving the first assertion. Let $\M_0$ be the $\GL$-orbit of $q_0$. It is easy to show that any surface in $\M$ is also a flat cover over a surface in $\M_0$. Thus there is a $\GL$-equivariant continuous map $\Psi$ from $\M$ to $\M_0$ that maps a surface to the unique surface it covers, and this map is continuous. To conclude, it remains to notice that the leaf $\FM_q$ is a connected component of the preimage of $q_0$ by $\Psi$, which is closed. 

\medskip

For the second assertion, notice that in this case, $\M$ contains a surface whose periods are rational and thus is a flat cover over a torus, which is a Veech surface. Up to flowing along $\FM$, we get a new surface that still is a flat cover of a torus and whose $\GL$-orbit closure is $\M$. The second assertion thus follows from the first one.   
\end{proof}

\textbf{Remark.} The affine invariant orbifold $\M$ in Proposition \ref{counterexample} is nonabsolute if, and only if, the set of branch points of $p$ contains a non periodic point. See \cite{Gutkin2003affine} for more details. 

\section{Examples of rank 1 affine invariant orbifolds with property $\mathcal{P}$}\label{examples}

\subsection{In genus 2}

\begin{prop}\label{pforeigen2}
Let $\M$ be a nonabsolute nonarithmetic rank one affine invariant orbifold contained in $\mathcal{H}(1,1)$. Then $\M$ has property $\mathcal{P}$.   
\end{prop}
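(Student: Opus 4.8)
The goal is to exhibit one surface $(X,\omega)$ in $\M$ with a horizontal $\FM$-stable cylinder decomposition carrying a minimal $\M$-isoperiodic deformation of degree at least $1$. Since $\M$ is a non-arithmetic rank one affine invariant orbifold in $\mathcal{H}(1,1)$, by McMullen's classification it is a connected component of some $\Omega E_D(1,1)$ with $D$ not a square. The plan is to use the fact that such loci are $\SL$-invariant and closed, together with the existence of periodic directions (guaranteed by density of the horocycle flow / a standard argument that every affine invariant orbifold contains completely periodic surfaces in suitable directions), to reduce to a surface with exactly two horizontal cylinders; this is the generic combinatorics for a genus $2$ Prym eigenform in $\mathcal{H}(1,1)$ decomposed into cylinders.

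First I would invoke a result (Calta, McMullen, or directly Proposition \ref{field}) to find a completely periodic surface $(X,\omega) \in \M$ in the horizontal direction with $m$ cylinders and compute $k(\M) = \mathbb{Q}[c_2 c_1^{-1}, \dots, c_m c_1^{-1}]$, which must equal $\mathbb{Q}(\sqrt{D})$ since $\M$ is non-arithmetic; in particular at least one ratio $c_i c_1^{-1}$ is irrational, so the circumferences already have degree at least $1$ over $\mathbb{Q}$. Next I would arrange (again using completeness of the periodic direction and, if necessary, flowing along $\FM$ or applying the twist map to merge saddle connections) that the decomposition is $\FM$-stable, i.e. all horizontal saddle connections vanish in $T^*_X \FM_X$: for a genus $2$ Prym eigenform in $\mathcal{H}(1,1)$ with the standard two-cylinder decomposition this is automatic because the two singularities are exchanged by the Prym involution and the boundaries consist only of saddle connections whose holonomy is pinned. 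Then by Lemma \ref{VK}, $\K(X)$ has dimension $r = \mathrm{Rel}(\M) = 1$ (since $\dim_{\mathbb{C}} \M = 3 = 2\,\mathrm{rk}(\M) + \mathrm{Rel}(\M)$), so $\K(X) = \mathbb{R} u$ for a single vector $u$, which is automatically minimal.

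The remaining point is to check that $\deg(u) \geq 1$, i.e. that the entries $u_i = u(\sigma_i) c_i^{-1}$ of this generator are not all rationally proportional. Here I would use the explicit description of $\K(X)$ for Prym eigenforms recalled in the Examples after Lemma \ref{VK}: $u$ lies in $\K(X)$ iff $\tau(u) \in \mathcal{F}_X$ and the Prym involution persists, and the eigenform condition forces a relation between the cross-curve periods and the circumferences governed by multiplication by $\sqrt{D}$ in the quadratic order. Concretely the ratio $u_1/u_2$ can be computed from the eigenform equation and involves $\sqrt{D}$, hence is irrational, giving $\deg(u) = 1$. The main obstacle I anticipate is precisely this computation: making rigorous that the generator of the one-dimensional kernel space has irrational coordinate ratio, which requires pinning down the combinatorics of the two-cylinder decomposition and the action of $\mathcal{O}_D$ on the anti-invariant cohomology explicitly; everything else (existence of a periodic direction, stability, dimension count) follows from the general machinery already set up.
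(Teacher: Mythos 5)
There is a genuine gap, and it sits exactly at the two places your argument leans hardest. First, your combinatorial model is wrong: you reduce to a \emph{two}-cylinder horizontal decomposition and claim $\FM$-stability is automatic because the Prym involution exchanges the singularities and ``pins'' the boundary saddle connections. In $\mathcal{H}(1,1)$ the rel dimension of $\M$ is $1$ and the rel direction is precisely the relative class of the difference of the two zeros, so the relative period of a saddle connection joining the two distinct singularities is \emph{not} pinned along $\FM$; any decomposition whose boundaries contain such a saddle connection fails the stability condition. In fact the point of flowing along $\FM$ is to disalign the two zeros, and once this is done the only stable horizontal decomposition in genus $2$ has \emph{three} cylinders (there are only two possible diagrams of separatrices and they are isomorphic); this three-cylinder picture is what the paper uses, and your two-cylinder reduction would have to be replaced by it.

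Second, the step you yourself flag as the ``main obstacle'' -- showing the generator $u$ of the one-dimensional space $\K(X)$ has degree at least $1$ -- does not require any computation with the eigenform equation or the action of $\mathcal{O}_D$. For a stable decomposition the rel deformation moves one zero relative to the other, so $u$ evaluates to $\pm t$ on each cross curve and its coordinates in the twist parameters are $(\pm c_1^{-1},\pm c_2^{-1},\pm c_3^{-1})$ (concretely $(c_1^{-1},-c_2^{-1},c_3^{-1})$ for the three-cylinder surface). Hence $\deg(u)=0$ would force all circumferences to be commensurable, and then proposition \ref{field} gives $k(\M)=\mathbb{Q}$, contradicting non-arithmeticity. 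You actually state the relevant fact (some ratio $c_ic_1^{-1}$ is irrational) in your second paragraph but never connect it to the entries of $u$; instead you defer to an unexecuted computation of $u_1/u_2$ via $\sqrt{D}$, and you also invoke McMullen's classification of $\M$ as a component of $\Omega E_D(1,1)$, which is not needed. With the correct stable three-cylinder model and the observation that the entries of $u$ are $\pm c_i^{-1}$, the proof closes immediately via proposition \ref{field}, which is exactly the paper's route.
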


\begin{proof}
Let $q$ be a horizontally periodic surface in $\M$. Such a surface is given, for example, by Corollary 6 of \cite{smillie2004minimal}. Notice that here $\frakK = \mathrm{ker} \rho$ since the leaves of $\mathcal{F}$ have dimension 1. In particular, $q$ is $\FM$ stable in that case if, and only if, it does not have a horizontal saddle connection connecting the two singularities. Notice that  applying $Rel_{i\cdot v}^t$ for a small $v \in \frakK$ to $q$ would break any potential such saddle connection while not changing the fact that the surface is horizontally periodic. This means that up to replacing $q$ with $Rel_{i\cdot v}^t q$ we can assume that the horizontal decomposition is $\FM$-stable. The surface $q$ thus has a cylinder decomposition as follows: 

\begin{center}
    \begin{tikzpicture}
    
    \draw (1,0) rectangle (2,.5); 
    \draw (0,.5) rectangle (2,1);
    \draw (0,1) rectangle (1,1.5); 
    
    \draw (1,0) node{$\bullet$};
    \draw (2,0) node{$\bullet$};
    \draw (0,1) node{$\bullet$};
    \draw (1,1) node{$\bullet$};
    \draw (2,1) node{$\bullet$};
    
    \draw (0,.5) node{$\times$};
    \draw (1,.5) node{$\times$};
    \draw (2,.5) node{$\times$};
    \draw (0,1.5) node{$\times$};
    \draw (1,1.5) node{$\times$};
    
    \draw (1.5,.25) node{1};
    \draw (1,.75) node{2};
    \draw (.5,1.25) node{3};
    
    \draw (1,-1) node{fig. The only stable cylinder decomposition in genus 2};
    
    \end{tikzpicture}
\end{center}

This can be seen by analyzing the saddle connection diagram of $q$, which is very simple in $\mathcal{H}(1,1)$. Now, let $v = I(\gamma,\cdot) \in H^1(X_q,\Sigma_q,\mathbb{C})$ where $\gamma$ is a homotopically trivial loop in $S$ that circles clockwise the singularity $\bullet$ (it is thus non trivial as an element of $H_1(X_q - \Sigma_q)$). Since $\gamma$ is homotopically trivial, we have that $I(\gamma,\cdot) \in \mathrm{ker}\rho \simeq \mathrm{ker}\rho_q$ and $w_v = (c_1^{-1},-c_2^{-1},c_3^{-1})$ where $c_i$ is the length of the core curve of the $\mathcal{C}_i$, as in Section 3. Now, if $\K$ is rational then all the entries of $w_v$ are pair-wise commensurable and by Proposition \ref{field}, this implies that $\M$ is arithmetic, a contradiction. So $q$ has property $\mathcal{P}$. 
\end{proof}

We recall that the nonabsolute rank one affine invariant orbifolds in genus 2 have been classified by McMullen in \cite{mcmullen2007dynamics}, and they are connected components of Prym eigenform loci. In this case $\mathcal{F}$ and $\FM$ are the same as the $\Omega E_D(1,1)$ are saturated by the isoperiodic foliation. These remarks, together with Theorem \ref{A} and Proposition \ref{counterexample} then implies: 

\begin{cor}[Theorem \ref{B}, genus 2 case]\label{Bgenus2}
Let $\M$ be a connected component of $\Omega E_D(1,1)$. Then either all the leaves $\FM$ are closed or all the leaves are projectively dense in $\M$. The latter case occurs if, and only if, $D$ is not a square. 
\end{cor}

McMullen gave in \cite{mcmullen2007dynamics} a description of the set of prym eigenforms of genus 2 as a $\mathbb{C}^{\ast}$-bundle over a Zariski open set of the Hilbert modular surface $X_D$. In that perspective, it is interesting to compare Corollary \ref{Bgenus2} with Theorem 9.3 in \cite{mcmullen2007foliations}. 

\subsection{Prym eigenform loci in genus 3}

\begin{prop}\label{pforeigen}
Let $\kappa = (2,2)^{odd}, (2,1,1)$ or $(1,1,1,1)$ and let $\M$ be a connected component of $\Omega E_D(\kappa)$. Then $\M$ has property $\mathcal{P}$ if and only if D is not a square. 
\end{prop}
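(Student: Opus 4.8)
The statement splits into two implications. For the easy direction, suppose $D$ is a square. Then $\mathcal{O}_D$ is not a domain (it contains zero divisors, equivalently $\mathbb{Q}\otimes\mathcal{O}_D\cong\mathbb{Q}\times\mathbb{Q}$) and the eigenform decomposition $\Omega^-(X)=\mathbb{C}\omega_1\oplus\mathbb{C}\omega_2$ is defined over $\mathbb{Q}$ with the two eigenforms carrying rational periods. Concretely, a Prym eigenform of square discriminant is a torus cover, hence arithmetic, so proposition \ref{counterexample} applies and shows the leaves of $\FM$ are closed; in particular $\M$ cannot have property $\mathcal{P}$, since property $\mathcal{P}$ forces (by proposition \ref{containshorocycle}) the horocycle orbit, hence a nontrivial set, into the closure of each leaf, contradicting closedness of the leaves in an arithmetic orbifold. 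So it remains to prove: if $D$ is \emph{not} a square, then $\M$ has property $\mathcal{P}$.

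\textbf{The main construction.} The strategy is to exhibit, for each of the three partitions $\kappa=(2,2)^{odd},(2,1,1),(1,1,1,1)$, an explicit Prym eigenform $(X,\omega)\in\M$ together with a horizontal $\FM$-stable cylinder decomposition carrying a minimal $\M$-isoperiodic deformation $u\in\K(X)$ of positive degree. First I would recall the standard models of Prym eigenforms in genus $3$: these surfaces are glued from polygons in a way that makes the Prym involution $\tau$ visibly act by a central symmetry exchanging pairs of cylinders (or fixing a single cylinder while reversing it). I would choose a periodic direction in which $(X,\omega)$ decomposes into cylinders permuted by $\tau$, and flow along the isoperiodic foliation to make the decomposition $\FM$-stable (no saddle connections between distinct singularities on the cylinder boundaries, or such saddle connections rigidly persist). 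The number $r$ of the Rel is $1$ for $(2,2)^{odd}$, $2$ for $(2,1,1)$ and $3$ for $(1,1,1,1)$; in each case the example above will exhibit such a stable decomposition explicitly, appealing to the enumeration in Appendix \ref{list} for $(1,1,1,1)$ to justify that the relevant cylinder diagrams are exactly the ones listed.

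\textbf{Producing the deformation of positive degree.} On the chosen model, the second bullet of the Examples after Lemma \ref{VK} identifies $\K(X)$: a twist vector $v=(v_1,\dots,v_m)$ lies in $\K(X)$ iff the twisted surface remains a Prym eigenform, and this amounts to (i) $\tau$-equivariance of the twists (forcing $v_i=v_j$ on $\tau$-paired cylinders, $v_i=0$ on a $\tau$-reversed cylinder) together with (ii) the $\mathcal{O}_D$-eigenform condition, which as noted does not involve relative periods and therefore imposes no further constraint beyond (i) once we are inside the Prym locus. So $\K(X)$ is cut out by rational linear equations, and I would pick a minimal-support nonzero $u\in\K(X)$. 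The degree of $u$ is $\dim_{\mathbb{Q}}\mathrm{Span}_{\mathbb{Q}}\langle u_i\rangle - 1$; I must show this is at least $1$, i.e.\ the nonzero coordinates of $u$ are not all rational multiples of one another. Here is where proposition \ref{field} does the work: the circumferences $c_i$ of the cylinders of a periodic surface in $\M$ generate a field containing $k(\M)$ over $\mathbb{Q}$ (after normalizing $c_1=1$), and since $D$ is not a square, $k(\M)=\mathbb{Q}(\sqrt D)\neq\mathbb{Q}$, so the $c_i$ cannot all be rationally proportional. Because the cross-curve periods $u(\sigma_i)=v_i c_i$ appearing in $\K(X)$ scale with the $c_i$, a minimal $\M$-isoperiodic deformation whose support meets cylinders of $\mathbb{Q}$-incommensurable circumference has degree $\geq 1$; the stable diagrams chosen above have this property. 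The step I expect to be the real obstacle is the bookkeeping for $(2,1,1)$ and $(1,1,1,1)$: verifying that among the (finitely many, by Appendix \ref{list}) stable cylinder diagrams there is one where the $\tau$-equivariant, minimal-support kernel vector genuinely spreads over cylinders of non-proportional circumference rather than collapsing onto a single $\tau$-orbit of equal-circumference cylinders — i.e.\ ruling out the ``accidental degree $0$'' configurations — and organizing this case analysis cleanly rather than drowning in diagrams.
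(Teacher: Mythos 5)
Your overall strategy is the same as the paper's: handle square $D$ via arithmeticity and proposition \ref{counterexample}, and for non-square $D$ exhibit a horizontal $\FM$-stable cylinder decomposition carrying a minimal deformation whose positive degree is forced by proposition \ref{field}. But there are two genuine problems. First, your description of $\K(X)$ is wrong as stated: you characterize membership by $\tau$-equivariance plus the eigenform condition and conclude that $\K(X)$ is cut out by rational linear equations. Those two conditions only describe staying in the Prym locus, i.e.\ (a piece of) $\V(X)$; membership in $\K(X)$ additionally requires the twist to be isoperiodic, and in the twist coordinates $(x_1,\dots,x_m)$ these extra equations have coefficients involving the circumferences $c_i$. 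Only $\V(X)$ is rational (Lemma \ref{VK}); $\K(X)$ is not, and it had better not be, since a rational $\K(X)$ would contain minimal vectors of degree $0$ and the criterion would never apply. The positive degree arises precisely because the integral isoperiodic relations on the relative periods $u(\sigma_i)$ turn into twist coordinates proportional to $c_i^{-1}$, which are incommensurable when $k(\M)=\mathbb{Q}(\sqrt D)\neq\mathbb{Q}$.

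Second, the step you defer as ``bookkeeping'' is in fact the content of the paper's proof, and your proposal does not contain the inputs needed to carry it out. For $(2,2)^{odd}$ and $(2,1,1)$ the paper does not survey all decompositions: it invokes Theorem B of \cite{lanneau2014connected} to place in each component one explicit model surface, on which an explicit vector (for instance $(c_1^{-1},-c_2^{-1},c_3^{-1},c_4^{-1})$, resp.\ $(c_1^{-1},-c_2^{-1},2c_3^{-1},-c_4^{-1},c_5^{-1})$) is verified to lie in $\K(X)$ and to be minimal; since its support is all the cylinders, degree $0$ would make every circumference ratio rational and hence $\M$ arithmetic by proposition \ref{field} --- this is exactly how the ``accidental degree $0$'' configuration you worry about is excluded, and it requires the specific models, not a generic periodic direction. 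For $(1,1,1,1)$, Appendix \ref{list} only enumerates stable decompositions with two cylinders fixed by the Prym involution, so before appealing to it one must: upgrade an $\FM$-stable decomposition to a stable one by disaligning two distinct singularities on a common boundary (possible because such singularities are not swapped by the involution); count the fixed points via Riemann--Hurwitz and, in the case of zero fixed cylinders, produce a new periodic direction through two fixed points of the involution using complete periodicity of rank $1$ orbifolds (Theorem 1.5 of \cite{wright2015cylinder}) so as to reduce to the two-fixed-cylinder case; and only then run through the five diagrams with explicit minimal vectors. None of these reductions appears in your outline, so the hard direction remains unproved.
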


\begin{proof}
Since $k(\M) = \mathbb{Q}(\sqrt{D})$, the case where $D$ is a square is thus contained in Proposition \ref{counterexample}. We assume from now on that $D$ is not a square. We treat separately the different strata:

\begin{itemize}
\item If $\M$ is contained in $\Omega E_D^{odd}(2,2)$, Theorem $B$ of \cite{lanneau2014connected} states that $\M$ contains a translation surface whose cylinder decomposition is as follows:
    
    \begin{center}
\begin{tikzpicture}

\draw (1,0) rectangle (2,.5) ;
\draw (0,.5) rectangle (3,1) ; 
\draw (0,1) rectangle (1,1.5); 
\draw (2,1) rectangle (3,1.5); 

\draw (0,.5) node{$\times$};
\draw (1,.5) node{$\times$};
\draw (2,.5) node{$\times$};
\draw (3,.5) node{$\times$};
\draw (0,1.5) node{$\times$};
\draw (1,1.5) node{$\times$};
\draw (2,1.5) node{$\times$};
\draw (3,1.5) node{$\times$};

\draw (1,0) node{$\bullet$};
\draw (2,0) node{$\bullet$};
\draw (0,1) node{$\bullet$};
\draw (1,1) node{$\bullet$};
\draw (2,1) node{$\bullet$};
\draw (3,1) node{$\bullet$};

\draw (1.5,0.25) node{1};
\draw (1.5,0.75) node{2};
\draw (.5,1.25) node{3};
\draw (2.5,1.25) node{4};

\draw (.5,.25) node[scale = 0.7] {A}; 
\draw (.5,1.75) node[scale = 0.7] {A}; 
\draw (2.5,.25) node[scale = 0.7] {B}; 
\draw (2.5,1.75) node[scale = 0.7] {B}; 

\end{tikzpicture}
\end{center}

The Prym involution $\lambda$ exchanges cylinder $1$ and $3$ and fixes cylinder 2 and 4 (so $c_1 = c_3$). Notice that the cylinder decomposition is $\FM$-stable (and even $\mathcal{F}$-stable) and that $\frakK$ has dimension $1$. We claim that $u = (c^{-1}_1,-c^{-1}_2,c_3^{-1},c^{-1}_4) \in \K$, we use the same argument as in the proof of Proposition \ref{pforeigen2}. If $\K$ is rational then all the entries of $u$ are pair-wise commensurable and by Proposition \ref{field}, $D$ is a square, a contradiction. So $q$ has property $\mathcal{P}$. 

\item If $\M$ is contained in $\Omega E_D(2,1,1)$, let $q \in \M$ and denote by $\lambda$ the Prym involution on $q$. Notice that $\lambda$ cannot fix a singularity of order 1 (see for instance Lemma 1 in \cite{lanneau2004hyperelliptic}) thus $\lambda$ swaps the order 1 singularities and it follows from Proposition \ref{localmodelleafforeigen} that $\frakK$ has dimension  1. By Theorem $B$ of \cite{lanneau2014connected}, we can assume that the cylinder diagram of $q$ is as follows: 

\begin{center}
\begin{tikzpicture}

\draw (0,0) rectangle (1,.5); 
\draw (0,.5) rectangle (2,1); 
\draw (1,1) rectangle (2,1.5); 
\draw (1,1.5) rectangle (3,2);
\draw (2,2) rectangle (3,2.5); 

\draw (0,0) node{$\bullet$}; 
\draw (1,0) node{$\bullet$};
\draw (0,1) node{$\bullet$}; 
\draw (1,1) node{$\bullet$};
\draw (2,1) node{$\bullet$}; 

\draw (1,1.5) node{$\times$}; 
\draw (2,1.5) node{$\times$};
\draw (3,1.5) node{$\times$};
\draw (2,2.5) node{$\times$};
\draw (3,2.5) node{$\times$};

\draw (0,.5) node{$\otimes$};
\draw (1,.5) node{$\otimes$};
\draw (2,.5) node{$\otimes$};
\draw (1,2) node{$\otimes$};
\draw (2,2) node{$\otimes$};
\draw (3,2) node{$\otimes$};

\draw (.5,-.25) node[scale = .7]{A};
\draw (.5,1.25) node[scale = .7]{A};
\draw (2.5,1.25) node[scale = .7]{B};
\draw (2.5,2.75) node[scale = .7]{B};

\draw (.5,.25) node{1};
\draw (1,.75) node{2};
\draw (1.5,1.25) node{3};
\draw (2,1.75) node{4};
\draw (2.5,2.25) node{5};

\end{tikzpicture}
\end{center}

Here the prym involution $\lambda$ swaps the cylinders 1 with 5, 2 with 4, fixes the cylinder 3 and fixes the singularity $\otimes$. Let $v = I(\gamma-\lambda_{\ast}\gamma,\cdot) \in H^1(X_q,\Sigma_q,\mathbb{C})$ where $\gamma$ is an homotopically trivial loop in $S$ that circles clockwise the singularity $\bullet$ (it is thus non trivial as an element of $H_1(X_q - \Sigma_q)$). Since $\gamma$ is homotopically trivial, we have that $v \in \mathrm{ker}\rho \simeq \mathrm{ker}\rho_q$ and thus $v \in \frakK$. It follows from Proposition \ref{localmodelleafforeigen} that $w_v = (-c_1^{-1},c_2^{-1},-2c_3^{-1},c_4^{-1},-c_5^{-1}) \in \K$. If $\K$ is rational then all the entries of $u$ are pair-wise commensurable and by Proposition \ref{field}, $D$ is a square, a contradiction. So $q$ has property $\mathcal{P}$.

\item Assume $\M$ is contained in $\Omega E_D(1,1,1,1)$, let $q \in \M$ and denote by $\lambda$ the Prym involution on $q$. Notice that $\lambda$ cannot fix a singularity of order 1 (see for instance Lemma 1 in \cite{lanneau2004hyperelliptic}) thus there are two pairs of singularities exchanged by the prym involution. It follows from Proposition \ref{localmodelleafforeigen} that $\frakK$ has dimension $2$. By Corollary 6 of \cite{smillie2004minimal}, we can assume that $q$ is horizontally periodic and up to replacing $q$ by $Rel_{i\cdot v}^t(q)$ for a small $v \in \frakK$, we can assume that no horizontal saddle connection on $q$ joins two different singularities of $\omega_q$. The horizontal cylinder decomposition of $q$ is thus $\FM$-stable. This has two consequences. First, we deduce that $q$ has exactly $8$ horizontal saddle connections, 2 for each of the singularities. Then, since each singularity appears on the bottom component of some cylinder of $q$, we conclude that $q$ has at least 4 horizontal cylinders. The boundary components of these cylinders are made either of 1 or two 2 saddle connections and there are exactly 8 boundary components made of exactly one saddle connection. Since the Prym involution maps a top component of a cylinder to the bottom one of some other cylinder, we deduce that there are at least 4 cylinders whose bottom component is made of a simple saddle connection. Finally, as each saddle connection appears on the bottom component of some cylinder of $q$ and since there are 8 of them, we see that $q$ has at least 6 cylinders. On the other hand, we also know that there are at most $6$ cylinders. See for instance Lemma C.1 in \cite{aulicino2016rank1} and thus $q$ has exactly 6 cylinders. Furthermore, we can always assume $\lambda$ fixes at least one of them. Indeed, if there is a horizontal segment $\sigma$ that joins two fixed points, then the concatenation $\gamma$ of $\sigma$ and $\lambda(\sigma)$ is a horizontal loop that has to be homologous to the core curve of a horizontal cylinder $\mathcal{C}$ (this loop does not cross any singularity as otherwise there would be a horizontal saddle connection connecting two different singularities) By construction, $\gamma$ is invariant by $\lambda$ and thus the cylinder $\mathcal{C}$ is itself invariant by $\lambda$. Conversely, if a cylinder $\mathcal{C}$ is preserved by $\lambda$, the core curve $\gamma$ of $\mathcal{C}$ contains two fixed points of $\lambda$. This is due to the fact that an orientation-reversing homeomorphism of the circle has at least two fixed points. Suppose now that $\lambda$ does not fix any of horizontal cylinders. There are thus 3 pairs of cylinders exchanged by $\lambda$ and an application of Riemann-Hurwtiz reveals that $\lambda$ has exactly $4$ fixed points. By the pigeon hole principle, there is then at least one pair of cylinders $\mathcal{C}$ and $\mathcal{C}'$ that contain two fixed points $p_1$ and $p_2$ of $\lambda$ on the union of their boundary components. Because these cylinders are exchanged by $\lambda$, it means that both $\mathcal{C}$ and $\mathcal{C}'$ have $p_1$ and $p_2$ on their boundaries. By what was said before, it is not the case that $p_1$ and $p_2$ are joined by a horizontal segment and then $\mathcal{C}$ has $p_1$ on one of its boundary components and $p_2$ on the other (likewise for $\mathcal{C}'$). Choose a segment $\sigma$ that connects $p_1$ and $p_2$ through $\mathcal{C}$. This segment does not encounter any singularity as the interior of a cylinder does not contain singularities. We can thus apply the same argument as before but this time in the direction of $\sigma$ to find at least one cylinder fixed by $\lambda$. Up to rotating the surface $q$, we can assume that this cylinder is horizontal. By the complete periodicity of rank 1 affine invariant orbifolds (Theorem 1.5 of \cite{wright2015cylinder}), we know that the horizontal direction is again periodic and up to applying Rel, we can assume again that the the horizontal direction is also $\FM$-stable and thus our analysis made at the beginning of the proof shows there are 6 horizontal cylinders. For parity reasons, $\lambda$ has to fix another horizontal cylinder. In conclusion, we can assume that $q$ has a $\FM$-stable cylinder decomposition made of 6 horizontal cylinders, exactly two of which are fixed by the Prym involution. The list of cylinder diagrams made of 6 cylinders in genus 3 has been established in \cite{aulicino2016rank1}. We reproduce it here:

\begin{center}
    \begin{tikzpicture}
    
    \begin{scope}[scale=0.5]
    
    \draw (0,2) -- (0,4) -- (-.5,5) -- (.5,5) -- (1,4) -- (1.5,5) -- (2.5,5) -- (2,4) -- (2,3) -- (3,3) -- (3,0) -- (2,0) -- (2,1) -- (1,1) -- (1,2) -- cycle;    
    
    \draw (0,2) node[fill=black!40,scale=.5,rectangle]{};
    \draw (0,3) node[fill=black!40,scale=.4,circle]{};
    \draw (0,4) node[fill=black,scale=.4,circle]{};
    \draw (-.5,5) node[fill=black!40,scale=.5,rectangle]{};
    \draw (.5,5) node[fill=black!40,scale=.5,rectangle]{};
    \draw (1,4) node[fill=black,scale=.4,circle]{};
    \draw (1.5,5) node[fill=black,scale=.5,rectangle]{};
    \draw (2.5,5) node[fill=black,scale=.5,rectangle]{};
    \draw (2,4)  node[fill=black,scale=.4,circle]{};
    \draw (2,3) node[fill=black!40,scale=.4,circle]{};
    \draw (3,3) node[fill=black!40,scale=.4,circle]{};
    \draw (3,2) node[fill=black!40,scale=.5,rectangle]{};
    \draw (3,1) node[fill=black,scale=.5,rectangle]{};
    \draw (3,0) node[fill=black!40,scale=.4,circle]{};
    \draw (2,0) node[fill=black!40,scale=.4,circle]{};
    \draw (2,1) node[fill=black,scale=.5,rectangle]{};
    \draw (1,1) node[fill=black,scale=.5,rectangle]{};
    \draw (1,2) node[fill=black!40,scale=.5,rectangle]{};
    
    \draw (.25,4.5) node[scale=.5]{1};
    \draw (1.75,4.5) node[scale=.5]{2};
    \draw (1,3.5) node[scale=.5]{3};
    \draw (1.5,2.5) node[scale=.5]{4};
    \draw (2,1.5) node[scale=.5]{5};
    \draw (2.5,.5) node[scale=.5]{6};
    
    \draw (.5,1.75) node[scale=0.6]{A};
    \draw (0,5.25) node[scale=0.6]{A};
    \draw (1.5,.75) node[scale=0.6]{B};
    \draw (2,5.25) node[scale=0.6]{B};
    \draw (2.5,-.25) node[scale=0.6]{C};
    \draw (2.5,3.25) node[scale=0.6]{C};
    
    \draw (1.5,-1.5) node{1.};
    
    \end{scope}
        
    \begin{scope}[shift = {(3,0)},scale=0.5]
    
    \draw (0,2) -- (0,3) -- (0,4) -- (1,4) -- (1,5) -- (2,5) -- (2,4) -- (2,3) -- (2.5,4) -- (3.5,4) -- (3,3) -- (3,2) -- (3,1) -- (2,1) -- (2,0) -- (1,0) -- (1,1) -- (1,2) -- cycle;   
        
    \draw (0,2) node[fill=black!40,scale=.5,rectangle]{};
    \draw(0,3) node[fill=black!40,scale=.4,circle]{};
    \draw(0,4) node[fill=black,scale=.4,circle]{};
    \draw(1,4) node[fill=black,scale=.4,circle]{};
    \draw(1,5) node[fill=black,scale=.5,rectangle]{};
    \draw(2,5) node[fill=black,scale=.5,rectangle]{};
    \draw(2,4) node[fill=black,scale=.4,circle]{};
    \draw(2,3) node[fill=black!40,scale=.4,circle]{};
    \draw(2.5,4) node[fill=black!40,scale=.5,rectangle]{};
    \draw(3.5,4) node[fill=black!40,scale=.5,rectangle]{};
    \draw(3,3) node[fill=black!40,scale=.4,circle]{};
    \draw(3,2) node[fill=black!40,scale=.5,rectangle]{};
    \draw(3,1) node[fill=black,scale=.5,rectangle]{};
    \draw(2,1) node[fill=black,scale=.5,rectangle]{};
    \draw(2,0) node[fill=black,scale=.4,circle]{};
    \draw(1,0) node[fill=black,scale=.4,circle]{};
    \draw(1,1) node[fill=black,scale=.5,rectangle]{};
    \draw(1,2) node[fill=black!40,scale=.5,rectangle]{};
    
    \draw (1.5,4.5) node[scale=.5]{1};
    \draw (2.75,3.5) node[scale=.5]{2};
    \draw (1,3.5) node[scale=.5]{3};
    \draw (1.5,2.5) node[scale=.5]{4};
    \draw (2,1.5) node[scale=.5]{5};
    \draw (1.5,.5) node[scale=.5]{6};
    
    \draw (1.5,-.25) node[scale=0.6]{A};
    \draw (.5,4.25) node[scale=0.6]{A};
    \draw (2.5,.75) node[scale=0.6]{B};
    \draw (1.5,5.25) node[scale=0.6]{B};
    \draw (.5,1.75) node[scale=0.6]{C};
    \draw (3,4.25) node[scale=0.6]{C};
    
    \draw (1.5,-1.5) node{2.};
    
    \end{scope}
    
    \begin{scope}[shift = {(6,0)},scale=0.5]
    
    \draw (0,0) -- (0,1) -- (-.5,2) -- (.5,2) -- (1,1) -- (1.5,2) -- (1.5,3) -- (1,4) -- (2,4) -- (2.5,3) -- (3,4) -- (4,4) -- (3.5,3) -- (3.5,2) -- (2.5,2) -- (2,1) -- (2,0) -- (1,0) -- cycle;
    
    \draw (0,0) node[fill=black,scale=.4,circle]{};
    \draw (0,1) node[fill=black!40,scale=.4,circle]{};
    \draw (-.5,2) node[fill=black,scale=.4,circle]{};
    \draw (.5,2) node[fill=black,scale=.4,circle]{};
    \draw (1,1) node[fill=black!40,scale=.4,circle]{};
    \draw (1.5,2) node[fill=black!40,scale=.5,rectangle]{};
    \draw (1.5,3) node[fill=black,scale=.5,rectangle]{};
    \draw (1,4) node[fill=black,scale=.4,circle]{};
    \draw (2,4) node[fill=black,scale=.4,circle]{};
    \draw (2.5,3) node[fill=black,scale=.5,rectangle]{};
    \draw(3,4) node[fill=black!40,scale=.5,rectangle]{};
    \draw (4,4) node[fill=black!40,scale=.5,rectangle]{};
    \draw (3.5,3) node[fill=black,scale=.5,rectangle]{};
    \draw (3.5,2) node[fill=black!40,scale=.5,rectangle]{};
    \draw (2.5,2) node[fill=black!40,scale=.5,rectangle]{};
    \draw(2,1) node[fill=black!40,scale=.4,circle]{};
    \draw (2,0) node[fill=black,scale=.4,circle]{};
    \draw (1,0) node[fill=black,scale=.4,circle]{};
    
    \draw (1.75,3.5) node[scale=.5]{1};
    \draw (3.25,3.5) node[scale=.5]{2};
    \draw (2.5,2.5) node[scale=.5]{3};
    \draw (.25,1.5) node[scale=.5]{4};
    \draw (1.75,1.5) node[scale=.5]{5};
    \draw (1,.5) node[scale=.5]{6};
   
    \draw (.5,-.25) node[scale=0.6]{A};
    \draw (0,2.25) node[scale=0.6]{A};
    \draw (1.5,-.25) node[scale=0.6]{B};
    \draw (1.5,4.25) node[scale=0.6]{B};
    \draw (3,1.75) node[scale=0.6]{C};
    \draw (3.5,4.25) node[scale=0.6]{C};
    
    \draw (1.5,-1.5) node{3.};        
    
    \end{scope}
        
    \begin{scope}[shift = {(9,0)},scale=0.5]
    
    \draw (0,0) -- (0,1) -- (-.5,2) -- (.5,2) -- (1,1) -- (1.5,2) -- (1.5,3) -- (1,4) -- (2,4) -- (2.5,3) -- (3,4) -- (4,4) -- (3.5,3) -- (3.5,2) -- (2.5,2) -- (2,1) -- (2,0) -- (1,0) -- cycle;
    
    \draw (0,0) node[fill=black,scale=.4,circle]{};
    \draw (0,1) node[fill=black!40,scale=.4,circle]{};
    \draw (-.5,2) node[fill=black!40,scale=.5,rectangle]{};
    \draw (.5,2) node[fill=black!40,scale=.5,rectangle]{};
    \draw (1,1) node[fill=black!40,scale=.4,circle]{};
    \draw (1.5,2) node[fill=black!40,scale=.5,rectangle]{};
    \draw (1.5,3) node[fill=black,scale=.5,rectangle]{};
    \draw (1,4) node[fill=black,scale=.4,circle]{};
    \draw (2,4) node[fill=black,scale=.4,circle]{};
    \draw (2.5,3) node[fill=black,scale=.5,rectangle]{};
    \draw(3,4) node[fill=black,scale=.4,circle]{};
    \draw (4,4) node[fill=black,scale=.4,circle]{};
    \draw (3.5,3) node[fill=black,scale=.5,rectangle]{};
    \draw (3.5,2) node[fill=black!40,scale=.5,rectangle]{};
    \draw (2.5,2) node[fill=black!40,scale=.5,rectangle]{};
    \draw(2,1) node[fill=black!40,scale=.4,circle]{};
    \draw (2,0) node[fill=black,scale=.4,circle]{};
    \draw (1,0) node[fill=black,scale=.4,circle]{};
   
    \draw (1.75,3.5) node[scale=.5]{1};
    \draw (3.25,3.5) node[scale=.5]{2};
    \draw (2.5,2.5) node[scale=.5]{3};
    \draw (.25,1.5) node[scale=.5]{4};
    \draw (1.75,1.5) node[scale=.5]{5};
    \draw (1,.5) node[scale=.5]{6};
   
    \draw (.5,-.25) node[scale=0.6]{B};
    \draw (0,2.25) node[scale=0.6]{A};
    \draw (1.5,-.25) node[scale=0.6]{C};
    \draw (1.5,4.25) node[scale=0.6]{B};
    \draw (3,1.75) node[scale=0.6]{A};
    \draw (3.5,4.25) node[scale=0.6]{C};
  
    \draw (1.5,-1.5) node{4.};         
    
    \end{scope}
    \end{tikzpicture}
\end{center}

In order to prove that $q$ has property $\mathcal{P}$, we claim that it is enough to find a pair $u=(u_1,\cdots,u_6)$ and $v=(v_1,\cdots,v_6)$ in $\K$ together with an integer $j \in \{1,\cdots,6\}$ such that the entries of $u$ are not all pairwise commensurable, that $u_j = 0$ and that $v_j \neq 0$ (we recall that we use the notation of section 3) Indeed, if such that triple $(u,v,j)$ exists, suppose to a contradiction that $\K$ is a rational subspace of $\R^6$ and let $V_u$ be the smallest rational subspace that contains $u$. We thus have $V_u \subset \K$. The dimension of $V_u$ is at least 2 as otherwise $u$ would be colinear to a rational vector, which would imply that its entries are all pair-wise commensurable. But since the dimension of $\K$ is 2, this means that $V_u = \K$.  In particular, any linear equation with rational coefficients satisfied by the entries of $u$ is satisfied by all the vectors of $\K$. But $u_j=0$ is such an equation that is not satisfied by $v$. We provide such a triple $(u,v,j)$ in all the cases. The vectors $u$ and $v$ will be of the form $w_{\nu}$ with $\nu = I(\gamma - \lambda_{\ast}(\gamma), \cdot) \in \mathrm{ker}\rho \simeq \mathrm{ker} \rho_q$ where $\gamma$ is a homotopically trivial loop that circles one of the singularities. By Proposition \ref{localmodelleafforeigen}, this ensures that they will belong to $\K$. 

\begin{enumerate}
    \item In that case, the Prym involution fixes the cylinders 2 and 4, exchanges 3 with 5 and 1 with 6. We can choose $u = (-c^{-1}_1,0,-c^{-1}_3,2c^{-1}_4,-c^{-1}_5,-c^{-1}_6)$, $v = (c^{-1}_1,2c^{-1}_2,-c^{-1}_3,0,-c^{-1}_5,c^{-1}_6)$ and $j=2$.
    \item In that case, the Prym involution fixes the cylinders 2 and 4, exchanges 3 with 5 and 1 with 6. We can choose $u=(0,-2c^{-1}_2,-c^{-1}_3,2c^{-1}_4,-c^{-1}_5,0)$, $v=(2c^{-1}_1,0,-c^{-1}_3,0,-c^{-1}_5,2c^{-1}_6)$ and $j=1$.
    \item In that case, the Prym involution preserves the cylinders 1 and 5, it swaps 3 with 6 and 4 with 2. We can choose $v=(0,c^{-1}_2,-c^{-1}_3,c^{-1}_4,2 c^{-1}_5, -c^{-1}_6)$, $v=(2c^{-1}_1,c^{-1}_2,-c^{-1}_3,c^{-1}_4,0,-c^{-1}_6)$ and $j=1$
    \item In that last case, there are two possibilities for the Prym involution. The first possibility is that it preserves cylinder 4 and 5 and swaps 3 with 6 and 1 with 2. The second possibility is that the Prym involution fixes the cylinders 1 and 2, swaps 3 with 6 and 4 with 5. In both cases, we can choose $(0,0,-c^{-1}_3,2c^{-1}_4,2c^{-1}_5,-c^{-1}_6) $, $v=(2c^{-1}_1,2c^{-1}_2,-c^{-1}_3,0,0,-c^{-1}_6)$ and $j=1$
\end{enumerate}

\end{itemize}
\end{proof}

Proposition \ref{pforeigen} together with Theorem \ref{A} and Proposition \ref{counterexample} implies the following: 

\begin{cor}[Theorem \ref{B}, genus 3 case]
Let $\kappa = (2,2)^{odd}, (2,1,1)$ or $(1,1,1,1)$ and let $\M$ be a connected component of $\Omega E_D(\kappa)$. Then either all the leaves of $\FM$ are all closed or all the leaves of $\FM$ are projectively dense. The last case occurs if, and only if, $D$ is not a square. 
\end{cor}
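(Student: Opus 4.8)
The statement to be proved is the corollary (= Theorem \ref{B}): for $\kappa = (2,2)^{odd}, (2,1,1)$ or $(1,1,1,1)$ and $\M$ a connected component of $\Omega E_D(\kappa)$, all leaves of $\FM$ are closed, or all are projectively dense, and the latter happens precisely when $D$ is not a square. The plan is to simply combine the three ingredients that have been assembled in the excerpt: Proposition \ref{pforeigen}, Theorem \ref{A} (in its form Theorem \ref{crit1}), and Proposition \ref{counterexample}. The dichotomy being asserted is exactly a square/non-square dichotomy on the discriminant $D$, so the proof splits into those two cases.

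\textbf{Case $D$ not a square.} Here the field of definition satisfies $k(\M) = \mathbb{Q}(\sqrt{D}) \neq \mathbb{Q}$, so $\M$ is non arithmetic; since $\M$ is also a non absolute rank $1$ affine invariant orbifold (it is a connected component of a Prym eigenform locus with at least two singularities, so the Rel is positive — this is where the exclusion of $\mathcal{H}^{hyp}(4)$, $\mathcal{H}^{odd}(4)$, $\mathcal{H}^{hyp}(2,2)$ matters), Proposition \ref{pforeigen} gives that $\M$ has property $\mathcal{P}$. Then Theorem \ref{crit1} applies verbatim and yields that $\FM$ is projectively minimal, i.e.\ every leaf of $\FM$ is projectively dense. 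I would state this in one sentence, citing those two results.

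\textbf{Case $D$ a square.} Here $k(\M) = \mathbb{Q}(\sqrt{D}) = \mathbb{Q}$, so $\M$ is arithmetic. Proposition \ref{counterexample} (second assertion) then gives directly that the leaves of $\FM$ are closed: an arithmetic rank $1$ affine invariant orbifold contains a periodic surface with rational periods, which is a flat cover of the square torus, and after flowing along $\FM$ one stays a torus cover with the same $\GL$-orbit closure, so the leaf is a connected component of a fibre of the continuous $\GL$-equivariant covering map $\Psi : \M \to \M_0$, hence closed. Again one sentence, citing Proposition \ref{counterexample}.

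\textbf{Main obstacle.} There is essentially no obstacle at the level of the corollary itself — it is a bookkeeping assembly of earlier results, and the only thing to be careful about is the ``all-or-nothing'' phrasing: one should note that in the non-square case projective density holds for \emph{every} leaf (not just one) because Theorem \ref{crit1} already packages the transitivity argument via equations \eqref{commutation} and \eqref{product}, and in the square case closedness likewise holds for every leaf since being a connected component of a fibre of $\Psi$ is a property of all leaves. The genuine work — verifying property $\mathcal{P}$ by exhibiting, for each relevant cylinder diagram in $\mathcal{H}(2,2)^{odd}$, $\mathcal{H}(2,1,1)$ and $\mathcal{H}(1,1,1,1)$, an explicit minimal $\M$-isoperiodic deformation of positive degree, and in the $(1,1,1,1)$ case enumerating the stable Prym cylinder decompositions (Appendix \ref{list}) — is all contained in the proof of Proposition \ref{pforeigen}, so nothing further is needed here.
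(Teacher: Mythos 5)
Your proposal is correct and follows exactly the paper's route: the paper derives the corollary by combining Proposition \ref{pforeigen} (property $\mathcal{P}$ when $D$ is not a square), Theorem \ref{A} (projective minimality of $\FM$), and Proposition \ref{counterexample} (closed leaves in the arithmetic, i.e.\ square-discriminant, case), which is precisely your two-case assembly. Nothing further is needed.
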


\subsection{in $\mathcal{H}^{hyp}(g-1,g-1)$}

\begin{prop}\label{hyper}
Let $\mathcal{M}$ be a nonabsolute rank one affine invariant orbifold in $\mathcal{H}^{hyp}(g-1,g-1)$. Then $\M$ has property $\mathcal{P}$ if and only if it is nonarithmetic.
\end{prop}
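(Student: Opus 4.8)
The plan is to mimic the genus $2$ argument and the proof of Proposition \ref{pforeigen}, replacing the Prym involution by the hyperelliptic involution $\iota$, which every surface of $\M$ carries and which acts by $-\mathrm{id}$ in period coordinates. Recall that $\M$ is arithmetic if and only if $k(\M)=\mathbb{Q}$.

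Suppose first that $\M$ is arithmetic. Then $\M$ contains a surface all of whose periods are rational, hence a flat cover of the square torus. By Proposition \ref{counterexample}, every leaf of $\FM$ is then closed, and since $\M$ is non absolute these leaves have a fixed positive dimension $r<\dim_{\mathbb{C}}\M$, so none of them is projectively dense. By Theorem \ref{A} (contrapositive), $\M$ does not have property $\mathcal{P}$.

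Suppose now that $\M$ is not arithmetic. By \cite{smillie2004minimal}, $\M$ contains a horizontally periodic surface $(X,\omega)$; after flowing along $\FM$ we may assume its decomposition, with cylinders $\mathcal{C}_1,\dots,\mathcal{C}_m$ of circumferences $c_1,\dots,c_m$, is $\FM$-stable, and by Proposition \ref{field} together with non-arithmeticity the $c_i$ are not all commensurable. The involution $\iota$ maps horizontal cylinders to horizontal cylinders while reversing the transverse orientation, hence permutes the $\mathcal{C}_i$; it exchanges the two zeros of $\omega$ and has $2g+2$ fixed points, each lying either on the core curve of an $\iota$-fixed cylinder or at the midpoint of a horizontal saddle connection joining the two zeros. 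Cylinders exchanged in a pair have equal circumference, and whenever $\iota$ exchanges two cylinders one can, by invoking the complete periodicity of rank $1$ affine invariant orbifolds (Theorem 1.5 of \cite{wright2015cylinder}) to pass to a new periodic direction and flowing again along $\FM$, reduce to a decomposition of ``staircase'' type, in which the $\mathcal{C}_i$ form a chain along which consecutive cylinders share boundary saddle connections and $\iota$ acts by reversing the chain. The gluing pattern of such a chain produces additive relations among the $c_i$ which force commensurability of the circumferences occurring along any maximal sub-chain to propagate to all of the $c_i$.

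Finally, since $\M$ is non absolute, Lemma \ref{VK} gives $\dim\K(X)=r\geq 1$, so $\K(X)$ contains a nonzero vector; choose $u\in\K(X)$ of minimal support, a minimal $\M$-isoperiodic deformation. If $\deg u=0$, the circumferences of the cylinders in the support of $u$ are pairwise commensurable, and the relations above then force all the $c_i$ to be commensurable, contradicting the choice of $(X,\omega)$; hence $\deg u\geq 1$, so $(X,\omega)$ --- and therefore $\M$ --- has property $\mathcal{P}$. The main obstacle is the middle paragraph: producing a description, uniform in $g$, of the $\FM$-stable cylinder decompositions of hyperelliptic surfaces under the $\iota$-action, and checking that a minimal deformation of degree $0$ would force all the circumferences to be commensurable. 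This is the analogue, carried out for all genera at once, of the explicit case checks made for the genus $3$ Prym loci, the rigidity of the $\iota$-fixed cylinders and of the chain structure in which they sit providing the needed leverage.
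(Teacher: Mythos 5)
Your arithmetic direction is fine (it matches how the paper handles that case, via Proposition \ref{counterexample} and Theorem \ref{A}). The gap is in the non-arithmetic direction, and it is exactly the point you flag yourself: you never establish the structural fact that makes the argument work, and the substitute you sketch does not. The paper's key observation is that in $\mathcal{H}(g-1,g-1)^{hyp}$ the hyperelliptic involution \emph{fixes every horizontal cylinder} (it does not merely permute them) while exchanging the two zeros; hence, once the decomposition is $\FM$-stable, each cylinder has one zero on its top boundary component and the other zero on its bottom one. Consequently any nonzero element of $\K(X)$ displaces one zero relative to the other by some $\epsilon$ and must therefore twist \emph{every} cylinder by $\pm\epsilon$, i.e.\ it has the form $(\delta_1 c_1^{-1},\dots,\delta_m c_m^{-1})$ with $\delta_i\in\{\pm 1\}$, supported on all cylinders. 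Only then does ``degree $0$'' translate into commensurability of all the $c_i$, hence arithmeticity via Proposition \ref{field}. Your middle paragraph instead allows $\iota$-exchanged pairs and tries to repair this with a ``staircase chain'' reduction and unspecified ``additive relations forcing commensurability''; neither is proved, and the latter is not a true mechanism (for a non-arithmetic surface the circumferences are precisely \emph{not} all commensurable, so no gluing relation can force this in general). Incidentally, the exchanged-pair scenario cannot occur at all: if a maximal cylinder $C$ were disjoint from $\iota(C)$, both boundary components of the corresponding cylinder on the quotient sphere would have to carry the unique zero of the quadratic differential, which is impossible since they lie in disjoint disks cut out by the core curve; this is the statement your argument is missing.

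There is also a non sequitur in your last step even granting the reduction: from $\deg u=0$ you conclude that the circumferences of the cylinders in the support of $u$ are commensurable. Degree $0$ only says the \emph{entries} $u_i$ are pairwise commensurable, and for a general minimal deformation there is no a priori relation between the $u_i$ and the $c_i^{-1}$; that relation (namely $u_i=\pm\epsilon c_i^{-1}$ for a single $\epsilon$) is exactly what the fixed-cylinder/one-zero-per-boundary structure provides. Without it, Proposition \ref{field} (which concerns the $c_i$, not the $u_i$) gives you nothing, so the contradiction with non-arithmeticity does not follow.
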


\begin{proof}
Let $q$ be a translation surface in $\M$, and choose a periodic direction on $q$. Up to applying Rel, we can ensure that direction is $\FM$-stable (which is equivalent to $\mathcal{F}$-stable here as there are only two singularities). Note that the singularity present on a boundary component of a cylinder is different from the singularity present on the other boundary component. This is due to the fact that the hyperelliptic involution exchanges the singularities of $\omega$ while fixing all the cylinders (see Lemma 2.2 in \cite{lindsey2015counting}). A consequence of that is that any $u \in \K$ has the form $u=(\delta_1tc_1^{-1},\cdots,\delta_mtc_m^{-1})$ with the $\delta_i$ belonging to $\{1,-1\}$. If $\K$ is rational, all the entries of $u$ are pairwise commensurable, a contradiction with the fact that $\M$ is nonarithmetic by \ref{field}. Therefore $\M$ has property $\mathcal{P}$ if and only if $\M$ is nonarithmetic.  
\end{proof}

\begin{thm}
Let $\M$ be a nonabsolute rank 1 affine invariant orbifold contained in $\mathcal{H}(g-1^2)^{hyp}$, and let $q$ be a surface in $\M$. Then the leaf of $\FM_q$ is either closed or projectively dense in $\M$. The latter case occurs if, and only if $\M$ is nonarithmetic. 
\end{thm}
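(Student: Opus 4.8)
The plan is to assemble the theorem from three ingredients already in place, exactly along the lines of the proof of Theorem~\ref{B}: Proposition~\ref{hyper} (which pins down property~$\mathcal{P}$ in this family), Theorem~\ref{A} (which turns property~$\mathcal{P}$ into projective density of all leaves), and the second assertion of Proposition~\ref{counterexample} (which handles the arithmetic case). First I would record that ``$\M$ arithmetic'' means $k(\M)=\mathbb{Q}$, so that the alternatives ``$\M$ arithmetic'' and ``$\M$ non arithmetic'' are mutually exclusive and jointly exhaustive; it then suffices to treat the two cases separately.

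If $\M$ is non arithmetic, Proposition~\ref{hyper} gives that $\M$ has property~$\mathcal{P}$: a horizontally periodic $\FM$-stable representative carries a minimal $\M$-isoperiodic deformation of the shape $(\delta_1 c_1^{-1},\dots,\delta_m c_m^{-1})$ with $\delta_i\in\{1,-1\}$, of positive degree by non-arithmeticity via Proposition~\ref{field}. Theorem~\ref{A}, in the form of Theorem~\ref{crit1}, then says that every leaf of $\FM$ is projectively dense in $\M$; in particular $\FM_X$ is. If instead $\M$ is arithmetic, the second assertion of Proposition~\ref{counterexample} applies directly and gives that every leaf of $\FM$ is closed; in particular $\FM_X$ is closed.

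Finally I would check that the two conclusions genuinely exclude one another, so that the stated equivalence is a real dichotomy: since $\M$ is non absolute and $rk(\M)=1$, a leaf of $\FM$ is a proper submanifold of positive codimension inside the positive-dimensional locus of surfaces of fixed area — for instance because the $\SL$-directions lie in that locus and are transverse to the leaf — so a projectively dense leaf cannot be closed. Hence exactly one of the two possibilities occurs, and the dense one occurs precisely when $\M$ is non arithmetic. The argument has no real obstacle: it is a mechanical combination of the cited results, and the only point deserving a sentence of care is this last incompatibility check, which guarantees that the ``if and only if'' is not vacuous.
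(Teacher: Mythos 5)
Your proposal is correct and matches the paper's (implicit) argument: the theorem is obtained exactly by combining Proposition \ref{hyper} with Theorem \ref{A} for the non arithmetic case and with the second assertion of Proposition \ref{counterexample} for the arithmetic case, just as in the genus 2 and Prym eigenform statements. Your closing check that a closed leaf cannot be projectively dense (the leaf having positive codimension in the fixed-area locus since $\M$ is rank 1 and non absolute) is a reasonable extra sentence the paper leaves tacit, but it does not change the route.
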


Paul Apisa has classified the rank one affine invariant orbifolds in the hyperelliptic strata and showed that if $\M$ is a nonarithmetic rank $1$ affine invariant orbifold, then it is a translation cover of a surface in $\Omega E_D(1,1)$. See \cite{apisa2017rank}.

\section{Dense isoperiodic leaves in $\mathcal{H}(2,1,1)$}\label{full}

This section is dedicated to proving Theorem \ref{C}. 

\begin{thm}[Theorem C]
Let $q \in \Omega E_D(2,1,1)$ where $D$ is not a square. The leaf $\mathcal{F}_q$ of $q$ is projectively dense in $\mathcal{H}(2,1,1)$. 
\end{thm}

\begin{proof}
It follows from the proof of Theorem \ref{A} and Proposition \ref{pforeigen} that the closure of $\mathcal{F}_q$ is $\GL$-invariant. By \cite{eskin2015isolation}, it is then an affine invariant orbifold $\M$ and we denote by $r$ its rank. As in the proof of Proposition \ref{pforeigen}, it can be assumed that $q$ is the following surface: 

\begin{center}
\begin{tikzpicture}

\draw (0,0) rectangle (1,.5); 
\draw (0,.5) rectangle (2,1); 
\draw (1,1) rectangle (2,1.5); 
\draw (1,1.5) rectangle (3,2);
\draw (2,2) rectangle (3,2.5); 

\draw (0,0) node{$\bullet$}; 
\draw (1,0) node{$\bullet$};
\draw (0,1) node{$\bullet$}; 
\draw (1,1) node{$\bullet$};
\draw (2,1) node{$\bullet$}; 

\draw (1,1.5) node{$\times$}; 
\draw (2,1.5) node{$\times$};
\draw (3,1.5) node{$\times$};
\draw (2,2.5) node{$\times$};
\draw (3,2.5) node{$\times$};

\draw (0,.5) node{$\otimes$};
\draw (1,.5) node{$\otimes$};
\draw (2,.5) node{$\otimes$};
\draw (1,2) node{$\otimes$};
\draw (2,2) node{$\otimes$};
\draw (3,2) node{$\otimes$};

\draw (.5,-.25) node[scale = .7]{A};
\draw (.5,1.25) node[scale = .7]{A};
\draw (2.5,1.25) node[scale = .7]{B};
\draw (2.5,2.75) node[scale = .7]{B};

\draw (.5,.25) node{1};
\draw (1,.75) node{2};
\draw (1.5,1.25) node{3};
\draw (2,1.75) node{4};
\draw (2.5,2.25) node{5};

\end{tikzpicture}
\end{center}

Notice that $\frakK = \mathrm{ker} \rho$, which is the same as saying that $\M$ is saturated by the isoperiodic foliation $\mathcal{F}$. Let $\nu_1 = I(\gamma_1,\cdot) \in \mathrm{ker}\rho_q$ where $\gamma_1$ is a homotopically trivial loop that circles clockwise the singularity $\bullet$. Let also $\nu_2 = I(\gamma_2,\cdot) \in \mathrm{ker}\rho_q$ where $\gamma_2$ is a homotopically trivial loop that circles clockwise the singularity $\times$. The vectors $w_{\nu_1}$ and $w_{\nu_2}$ belong to $\K$ and we compute that $w_{\nu_1} = (-c^{-1}_1,c^{-1}_2,-c^{-1}_3,0,0)$ and $w_{\nu_2} = (0,0, c^{-1}_3,-c^{-1}_4, c^{-1}_5)$. Let $V_1$ be the smallest rational subspace that contains $w_{\nu_1}$. The dimension of $V_1$ is at least 2 as otherwise $c_1$, $c_2$ and $c_3$ would be commensurable and since the Prym involution swaps the cylinder 1 with 5 and 2 with 4 we would get that all the cylinders are pairwise commensurable, a contradiction with $D$ not being a square. If $V_1 \subset \K$, then by dimension count we have $V_1 = \K$. In particular, any linear equation with rational coefficients satisfied by the entries of $w_{\nu_1}$ is also satisfied by that of $w_{\nu_2}$. But ${w_{\nu_1}}_{5}=0$ is such an equation that is not satisfied by the entries of $w_{\nu_2}$. This proves that $V_1$ is not contained in $\K$. If we assume that $r=1$ then by Proposition \ref{VK} we know that $\V$ is rational and thus $V_1 \subset \V$. Since $w_{\nu_1}$ and $w_{\nu_2}$ form a basis of $\K$, there is $v_1 \in V_1$ and a $t \in \mathbb{R}-\{0\}$ such that $\mu = v_1 + t \cdot w_{\nu_2}$. Any linear equation with rational coefficients satisfied by the entries of $w_{\nu_1}$ has to be satisfied by that of $v_1$. It comes that $v_{1,4} = v_{1,5} = 0$ and thus $\mu_4 = t\cdot {w_{\nu_2}}_{4}$ and $\mu_5 = t\cdot {w_{\nu_1}}_5$. This means that $h_4 = -h_5$, which is a contradiction as the height of cylinders is a positive number. This shows that the rank of $\M$ is greater than $1$. However, the classification of Aulicino and Nguyen of rank 2 affine invariant orbifolds (\cite{aulicino2016rank}) in genus 3 shows none of them is saturated by $\mathcal{F}$. Thus $r=3$, and by \cite{mirzakhani2018full}, it has to be the whole stratum, since the hyperelliptic locus is not saturated by $\mathcal{F}$ either.  
\end{proof}

\bibliographystyle{alpha}
\bibliography{bibdeflo.bib}

\end{document}